\documentclass[11pt]{article}
\usepackage{geometry,graphicx,latexsym,amssymb,amsthm}
\usepackage{tikz}\usetikzlibrary{snakes}
\usetikzlibrary{calc}
\usepackage{hyperref}
\usepackage{subfigure}

\textwidth=6.0in \textheight=8.4in \evensidemargin=0in
\oddsidemargin=0in \topmargin=0in \topskip=0pt \baselineskip=12pt
\parskip=8pt
\parindent=1em

\usepackage{subfigure}

\newtheorem{thm}{Theorem}
\newtheorem{lem}[thm]{Lemma}

\newtheorem{conj}[thm]{Conjecture}

\newtheorem{clm}{Claim}

\newtheorem{subclm}{Claim}[clm]

\newcommand{\barG}{\overline{G}}
\newcommand{\gL}{\gamma_L}

\newcommand{\oc}{{\rm oc}}

\newcommand{\match}{\alpha'}

\newcommand{\smallqed}{{\tiny ($\Box$)}}

\newcommand{\1}{\vspace{0.1cm}}

\newcommand{\cC}{{\cal C}}
\newcommand{\cD}{{\cal D}}

\def\vertex(#1){\put(#1){\circle*{2}}}
\def\vertexo(#1){\put(#1){\circle{2}}}
\def\vert(#1){\put(#1){\circle*{1.5}}}
\def\verto(#1){\put(#1){\circle{1.5}}}
\def\lab(#1)#2{\put(#1){\makebox(0,0)[c]{#2}}}
\setlength{\unitlength}{1mm}

\newenvironment{unnumbered}[1]{\trivlist
\item [\hskip \labelsep {\bf #1}]\ignorespaces\it}{\endtrivlist}

\begin{document}

\title{Location-Domination and Matching in Cubic Graphs}

\author{Florent Foucaud$^{1,2}$ and Michael A. Henning$^2$\thanks{Research supported in part by the South African National Research Foundation and the University of Johannesburg.}  \\
\\
$1$: LIMOS - CNRS UMR 6158 \\
Universit\'e Blaise Pascal \\
Clermont-Ferrand, France \\[2mm]
$2$: Department of Pure and Applied Mathematics \\
University of Johannesburg \\
Auckland Park, 2006 South Africa\\[2mm]
E-mails: florent.foucaud@gmail.com, mahenning@uj.ac.za}

\date{}
\maketitle

\begin{abstract}
A dominating set of a graph $G$ is a set $D$ of vertices of $G$ such that every vertex outside $D$ is adjacent to a vertex in $D$. A locating-dominating set of $G$ is a dominating set $D$ of $G$ with the additional property that every two distinct vertices outside $D$ have distinct neighbors in $D$; that is, for distinct vertices $u$ and $v$ outside $D$, $N(u) \cap D \ne N(v) \cap D$ where $N(u)$ denotes the open neighborhood of $u$. A graph is twin-free if every two distinct vertices have distinct open and closed neighborhoods. The location-domination number of $G$, denoted $\gL(G)$, is the minimum cardinality of a locating-dominating set in $G$. Garijo, Gonz\'alez and M\'arquez [Applied Math. Computation 249 (2014), 487--501] posed the conjecture that for $n$ sufficiently large, the maximum value of the location-domination number of a twin-free, connected graph on $n$ vertices is equal to~$\lfloor \frac{n}{2} \rfloor$. We propose the related (stronger) conjecture that if $G$ is a twin-free graph of order $n$ without isolated vertices, then $\gL(G)\le \frac{n}{2}$. We prove the conjecture for cubic graphs. We rely heavily on proof techniques from matching theory to prove our result.
\end{abstract}

{\small \textbf{Keywords:} Locating-dominating set; Dominating set; Matching. }\\
\indent {\small \textbf{AMS subject classification: 05C69}

\section{Introduction}

A \emph{dominating set} in a graph $G$ is a set $D$ of vertices of $G$ such that every vertex outside $D$ is adjacent to a vertex in $D$. The \emph{domination number}, $\gamma(G)$, of $G$ is the minimum cardinality of a dominating set in $G$. The literature on the subject of domination parameters in graphs up to the year 1997 has been surveyed and detailed in the two books~\cite{hhs1, hhs2}.
In this paper, we focus our attention on a variation of domination, called \emph{location-domination}, which is widely studied in the literature. A \emph{locating}-\emph{dominating set} is a dominating set $D$ that locates all the vertices in the sense that every vertex outside $D$ is uniquely determined by its neighborhood in $D$. The \emph{location-domination number} of $G$, denoted $\gL(G)$, is the minimum cardinality of a locating-dominating set in $G$. The concept of a locating-dominating set was introduced and first studied by Slater~\cite{s2,s3} and studied in~\cite{cst,fh,Heia,rs,s2,s3,s4} and elsewhere.

A classic result due to Ore~\cite{o9} states that every graph without isolated vertices has a dominating set of cardinality at most one-half its order. As observed in~\cite{Heia}, while there are many graphs (without isolated vertices) which have location-domination number much larger than one-half their order, the only such graphs that are known contain many \emph{twins}, that is, pairs of vertices with the same closed or open neighborhood. Garijo, Gonz\'alez, and M\'arquez~\cite{conjpaper} consider the function $\lambda_{|\mathcal C^*}(n)$, which is the maximum value of the location-domination number of a twin-free, connected graph on $n$ vertices. They prove that for every $n \ge 14$, $\lambda_{|\mathcal C^*}(n) \ge \lfloor \frac{n}{2} \rfloor$, and they find different conditions for a twin-free graph $G$ to satisfy $\gL(G) \le \lfloor \frac{n}{2} \rfloor$. Motivated by these results, they state the following conjecture.

\begin{conj}[\cite{conjpaper}]\label{conj_original}
There exists a positive integer $n_1$ such that, for every $n \ge n_1$,  $\lambda_{|\mathcal C^*}(n) = \lfloor \frac{n}{2} \rfloor$.
\end{conj}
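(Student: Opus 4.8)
The plan is to prove the two matching inequalities $\lambda_{|\mathcal{C}^*}(n)\ge\lfloor\frac n2\rfloor$ and $\lambda_{|\mathcal{C}^*}(n)\le\lfloor\frac n2\rfloor$ separately for all large $n$. The lower bound is already in hand: the construction of Garijo, Gonz\'alez and M\'arquez~\cite{conjpaper} exhibits, for every $n\ge 14$, a twin-free connected graph on $n$ vertices whose location-domination number equals $\lfloor\frac n2\rfloor$, so $\lambda_{|\mathcal{C}^*}(n)\ge\lfloor\frac n2\rfloor$. Hence the entire content of the conjecture is the upper bound: \emph{every} twin-free connected graph $G$ of order $n$ satisfies $\gL(G)\le\lfloor\frac n2\rfloor$. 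I would in fact aim for the slightly stronger statement that every twin-free graph without isolated vertices satisfies $\gL(G)\le\frac n2$, since once this is known, connectivity and the largeness of $n$ play no further role. Proving this stronger bound for \emph{all} degree sequences, and not merely for cubic graphs, is the crux.

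For the upper bound my approach is matching-theoretic. Fix a maximum matching $M$ of $G$ and try to build a locating-dominating set $D$ by choosing exactly one endpoint from each edge of $M$, so that $|D|$ is as close as possible to $\frac n2$. Domination is then nearly free: every $M$-saturated vertex left outside $D$ is dominated by its matched partner in $D$. The real work is the \emph{locating} property, i.e.\ ensuring that $v\mapsto N(v)\cap D$ is injective on $V(G)\setminus D$. Two outside vertices $u,v$ with $N(u)\cap D=N(v)\cap D$ behave like twins relative to $D$, and I would resolve such a collision by an \emph{exchange argument}: flip the chosen endpoint along an $M$-alternating structure through $u$ or $v$ so as to separate their $D$-neighborhoods. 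Using twin-freeness of $G$ (which forces $N(u)\ne N(v)$ or $N[u]\ne N[v]$ in $G$ itself), one wants to show that such a separating flip always exists and can be carried out without creating new collisions, controlled by a potential-function or discharging bound on the number of flips.

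The principal obstacle, and the reason the general case is so much harder than the cubic one, is that $G$ need not have a perfect or near-perfect matching, so selecting one endpoint per edge of $M$ leaves the $\mathrm{def}(G)$ unmatched vertices both undominated and unlocated. Here I would invoke the Gallai--Edmonds decomposition $V(G)=D(G)\cup A(G)\cup C(G)$: the part $C(G)$ has a perfect matching and is handled as above, $A(G)$ is matched into distinct factor-critical components of $G[D(G)]$, and each such odd component is matched near-perfectly, leaving exactly one exposed vertex. The delicate point is to dominate and locate these exposed vertices, using the vertices of $A(G)$ and the internal edges of the factor-critical components, while keeping $|D|\le\lfloor\frac n2\rfloor$. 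Since a factor-critical component on $k$ vertices forces at least $\lceil k/2\rceil$ of its vertices into $D$, the deficiency directly pressures the size budget, and it is precisely the balance between this deficiency and the locating requirement that must be controlled.

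I expect the hardest step to be this general deficiency analysis. In the cubic case the degree restriction tightly limits how factor-critical components can attach to $A(G)$ and how neighborhoods can coincide, which is what lets the matching argument close cleanly; for arbitrary twin-free graphs, by contrast, one must show that twin-freeness always supplies enough distinct neighborhoods to simultaneously absorb every exposed vertex and repair every locating collision within the $\lfloor\frac n2\rfloor$ budget. Establishing this quantitative interplay, ideally through a single global exchange and discharging scheme carried out over the whole Gallai--Edmonds structure, is the core difficulty on which the full conjecture rests.
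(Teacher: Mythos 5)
The first thing to note is that the paper does not prove the statement either: Conjecture~\ref{conj_original} is posed as an open conjecture, and the paper's actual contribution (Theorem~\ref{thm:main}) is only the cubic case of the stronger Conjecture~\ref{conj}. Your reduction is sound as far as it goes: the lower bound $\lambda_{|\mathcal C^*}(n) \ge \lfloor \frac{n}{2} \rfloor$ for $n \ge 14$ is indeed established in~\cite{conjpaper}, so the entire content of the conjecture is the upper bound for all twin-free connected graphs of large order, and the strengthening you aim for is exactly the paper's Conjecture~\ref{conj}. Up to that point you and the paper agree.

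The gap is that everything after the reduction is a program, not a proof. The two steps that carry all the weight --- (i) the exchange argument showing that a ``separating flip'' along $M$-alternating structures always exists and can be iterated without creating new collisions, and (ii) the Gallai--Edmonds deficiency analysis showing that the exposed vertices can be dominated and located within the $\lfloor \frac{n}{2} \rfloor$ budget --- are stated as objectives and never carried out; you yourself flag (ii) as the hardest step and leave it open. These are precisely the obstructions that make the conjecture open. It is worth noting that the paper's proof of the cubic case follows essentially your outline: a maximum matching $M_0$, a set $D_0 \in \cD_G(M_0)$ containing one endpoint of each matched edge, exchange arguments minimizing the number of bad pairs (Lemmas~\ref{lem:M-unmatched-2dom}--\ref{lem:matched} and Claim~\ref{clm:bad-pair-structure}), the Tutte--Berge structure (Theorem~\ref{t:structure}) in place of Gallai--Edmonds, and an auxiliary graph $G^*$ on the $M_0$-unmatched vertices whose dominating set $D^*$ absorbs the deficiency. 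But every one of those steps leans on cubicity: bad pairs are exactly $2$-dominated, unmatched vertices have two neighbors in $D_0$, $G^*$ is subcubic, and the dependency digraph on bad $4$-cycles has in-degree at most~$1$ and out-degree at most~$2$. None of this transfers to arbitrary degree sequences, where the deficiency and the collision structure are unbounded, so your proposal does not constitute a proof of the statement; it is a restatement of the open problem together with the (known) strategy that succeeds in the cubic case.
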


We pose the related conjecture that in the absence of twins, the classic bound of one-half the order for the domination number also holds for the location-domination number. 

\begin{conj}
Every twin-free graph $G$ of order $n$ without isolated vertices satisfies $\gL(G) \le \frac{n}{2}$.
 \label{conj}
\end{conj}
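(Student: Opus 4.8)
The plan is to reduce to connected graphs and then exploit matching structure, following the philosophy of the cubic case but in full generality. First, observe that twins are always adjacent or share a neighbor, and hence lie in a common component: closed twins $u,v$ satisfy $u \in N[v]$ and so are adjacent, while open twins with a common neighbor are at distance two, and open twins with no neighbor would both be isolated, which is excluded. Consequently each component of a twin-free graph without isolated vertices is itself twin-free and has order at least two. Taking the union of locating-dominating sets of the components yields a locating-dominating set of $G$ (two vertices in different components cannot be confused, since each is dominated and hence has a nonempty trace inside its own component), and the bound $\gL(G) \le n/2$ is additive over components. Thus I may assume $G$ is connected.

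Fix a maximum matching $M$ of $G$ and let $W$ be the set of $M$-exposed vertices, so that $n = 2|M| + |W|$ and the target bound becomes $|D| \le |M| + \frac{1}{2}|W|$. Since $M$ is maximum, $W$ is independent and every vertex of $W$ has all of its neighbors among the matched vertices. The central idea is to place exactly one endpoint of each matching edge into a candidate set $D_M$ (so $|D_M| = |M|$) and to spend the remaining budget of $\frac{1}{2}|W|$ on a subset $W' \subseteq W$, setting $D = D_M \cup W'$. Choosing one endpoint per edge already dominates the unchosen endpoints, so the work is to orient these choices, and to select $W'$, so that every exposed vertex outside $W'$ is dominated and all vertices outside $D$ receive pairwise distinct traces $N(\cdot) \cap D$.

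For the structural control of $W$ I would invoke the Gallai--Edmonds decomposition: the exposed vertices lie in the set of vertices missed by some maximum matching, whose components are factor-critical and attach to $G$ through a cut set. Within each factor-critical component a near-perfect matching lets me orient edges toward the unique exposed vertex, which both dominates that vertex and assigns it a private neighbor in $D_M$; this is the mechanism by which exposed vertices are distinguished from one another and need not all be inserted into $W'$. The twin-free hypothesis enters precisely when two vertices $x,y \notin D$ are \emph{confused}, i.e.\ $N(x) \cap D = N(y) \cap D$: twin-freeness guarantees a separating vertex $z \in N(x)\,\triangle\,N(y)$, and the aim is to arrange the endpoint choices so that at least one such $z$ lies in $D$ for every confused pair, rerouting choices along alternating paths in the style of matching theory whenever a local conflict arises.

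The hard part will be exactly this last step: turning the \emph{existence} of a separator for each confused pair, furnished by twin-freeness, into a \emph{single global selection} of matching endpoints (plus the small reserve $W'$) that separates all pairs simultaneously while never exceeding the budget $|M| + \frac{1}{2}|W|$. Unlike the cubic case, where bounded degree caps the number of pairs any vertex can belong to and makes a discharging or augmenting argument tractable, in the general setting a single flip of an endpoint choice can create new confused pairs elsewhere, so the alternating-path rerouting must be shown to terminate without accumulating cost. I expect that controlling this interaction --- and in particular bounding by roughly $\frac{1}{2}|W|$ the number of exposed vertices that location constraints force into $W'$ --- is where the essential difficulty lies, and where a genuinely new idea beyond the cubic argument will be required.
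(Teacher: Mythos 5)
There is a genuine gap, and you have in fact named it yourself. The statement you are addressing is Conjecture~\ref{conj}, which the paper does \emph{not} prove: the paper poses it as an open problem and establishes only the special case of cubic graphs (Theorem~\ref{thm:main}). Your proposal likewise does not prove it. Everything up to and including the Gallai--Edmonds setup is a plan, not an argument, and the decisive step --- converting the pairwise separators guaranteed by twin-freeness into one global choice of matching endpoints (plus a reserve $W' \subseteq W$ with $|W'| \le \frac{1}{2}|W|$) that simultaneously locates all pairs --- is exactly where you write that ``a genuinely new idea beyond the cubic argument will be required.'' A proof that defers its central step is not a proof; what you have written is a research program for attacking an open conjecture.

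It is worth being concrete about why the cubic machinery you are trying to generalize does not survive the removal of the degree bound. In the paper's cubic proof, the degree-$3$ hypothesis is load-bearing at every stage: Lemma~\ref{lem:2dom} forces the two vertices of any bad pair to be exactly $2$-dominated (a $3$-dominated bad pair would consist of open twins, which is only automatic when the degree is $3$); this in turn forces every bad pair onto a bad $(D_0,M_0)$-matched $4$-cycle; the dependency digraph on these $4$-cycles has out-degree at most $2$ and in-degree at most $1$ only because the degree is $3$; and the auxiliary graph $G^*$ on unmatched vertices is subcubic without isolated vertices, so Ore's theorem supplies a dominating set $D^*$ with $|D^*| \le |V^*|/2$, which is precisely what funds the budget of $\frac{1}{2}|W|$ extra vertices. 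In your general setting none of these structures exist: confused pairs need not be $2$-dominated, need not lie on short cycles, a single endpoint flip can create unboundedly many new confused pairs (as you note), and your reserve $W'$ of size $\frac{1}{2}|W|$ comes with no mechanism --- no analogue of $G^*$ and no Ore-type bound --- guaranteeing it suffices. So the proposal correctly identifies the obstruction but does not overcome it, and the conjecture remains open.
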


We remark that Conjecture~\ref{conj} implies Conjecture~\ref{conj_original}. Indeed, if Conjecture~\ref{conj} is true, then $\lambda_{|\mathcal C^*}(n) \le \lfloor \frac{n}{2} \rfloor$ for all $n \ge 2$, which implies, by the results of Garijo et al.~\cite{conjpaper}, that $\lambda_{|\mathcal C^*}(n) = \lfloor \frac{n}{2} \rfloor$ for every $n \ge 14$. Moreover, Conjecture~\ref{conj} is a stronger conjecture than Conjecture~\ref{conj_original} in the sense that Conjecture~\ref{conj} applies to twin-free graphs of arbitrary order with no isolated vertex, while Conjecture~\ref{conj_original} is claimed to hold only for (connected) twin-free graphs of sufficiently large order.~\footnote{In~\cite{Heia}, we attributed Conjecture~\ref{conj} to the authors of~\cite{conjpaper} who posed Conjecture~\ref{conj_original}. However, as correctly pointed out by the reviewers of the current paper, the statements of Conjecture~\ref{conj_original} and Conjecture~\ref{conj} are different. Hence, although Conjecture~\ref{conj} is motivated by Conjecture~\ref{conj_original}, we pose Conjecture~\ref{conj} as an independent conjecture which is a strengthening of Conjecture~\ref{conj_original}.}

Strict inequality may hold in Conjecture~\ref{conj}. Consider, for example, the
twin-free, bipartite graph $G$ formed by taking as one partite set a set $S$ of $k \ge 2$ elements, and as the other partite set all the distinct non-empty subsets of $S$, and joining each element of $S$ to those subsets it is a member of. Then, $G$ has order~$n = k + 2^k - 1$ and $\gL(G) = |S| = k = \lfloor \log_2 n \rfloor$. This is a classic construction in the area of location-domination, see for example~\cite{s3}.

Garijo et al.~\cite{conjpaper} prove Conjecture~\ref{conj} for graphs without $4$-cycles (which include trees) and for the class of graphs with independence number at least one-half the order (which includes bipartite graphs). Further, they prove Conjecture~\ref{conj} for twin-free graphs satisfying certain conditions on the upper domination number and the chromatic number. In~\cite{Heia}, the authors provide several constructions for twin-free graphs with location-domination number one-half their order. The variety of these constructions shows that these graphs have a rich structure, which is an indication that Conjecture~\ref{conj} might be difficult to prove. Further support is given to this conjecture in~\cite{Heia} where it is proved for split graphs and co-bipartite graphs, and in~\cite{Line} where it is proved for line graphs. The following theorem summarizes the known results about Conjecture~\ref{conj}.

\begin{unnumbered}{Theorem (\cite{Heia,conjpaper,hl12})}
Conjecture~\ref{conj} is true if the twin-free graph $G$ of order~$n$ (without isolated vertices) satisfies any of the following conditions. \\
\hspace*{0.1cm} {\rm (a)} {\rm (\cite{conjpaper})} $G$ has no $4$-cycles. \\
\hspace*{0.1cm} {\rm (b)} {\rm (\cite{conjpaper})} $G$ has independence number at least $\frac{n}{2}$. \\
\hspace*{0.1cm} {\rm (c)} {\rm (\cite{conjpaper})} $G$ has clique number at least $\lceil\frac{n}{2}\rceil+1$. 
\\
\hspace*{0.1cm} {\rm (g)} {\rm (\cite{conjpaper})} $G$ has upper domination number at least~$\frac{n}{2}$ or $\barG$ has upper domination number at least~$\frac{n}{2} + 1$. \\
\hspace*{0.1cm} {\rm (h)} {\rm (\cite{conjpaper})} $G$ has chromatic number at least~$\frac{3n}{4}$ or $\barG$ has chromatic number at least~$\frac{3n}{4} + 1$. \\
\hspace*{0.1cm} {\rm (d)} {\rm (\cite{Heia})} $G$ is a split graph or a co-bipartite graph.\\
\hspace*{0.1cm} {\rm (e)} {\rm (\cite{Line})} $G$ is a line graph. \\
\hspace*{0.1cm} {\rm (f)} {\rm (\cite{hl12})} $G$ is a claw-free, cubic  graph.
\end{unnumbered}

In this paper, we continue to advance the study of Conjecture~\ref{conj} by proving it for the class of cubic graphs, as stated in our main theorem:

\begin{thm}\label{thm:main}
If $G$ is a twin-free, cubic graph of order~$n$, then $\gL(G)\le \frac{n}{2}$.
 \label{t:main}
\end{thm}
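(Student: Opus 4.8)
The plan is to reduce to connected graphs and then build a locating-dominating set of size at most $\frac n2$ from a matching. First I would note that the claim reduces to the connected case: two vertices lying in different components have disjoint (nonempty, since $G$ is cubic) open neighborhoods and hence cannot be twins, so $G$ is twin-free if and only if each of its components is, while both $n$ and $\gL$ add over components. As a warm-up and as the guiding principle, observe that if $W\subseteq V(G)$ is an \emph{independent} set, then $D=V(G)\setminus W$ is automatically locating-dominating: every $w\in W$ has all three of its neighbors in $D$, so $D$ dominates, and for distinct $u,v\in W$ the traces $N(u)\cap D=N(u)$ and $N(v)\cap D=N(v)$ differ because $G$ is (open-)twin-free. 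This already gives $\gL(G)\le n-\alpha(G)$, recovering case~(b); the genuine difficulty is therefore the regime $\alpha(G)<\frac n2$, where $W$ must be allowed to induce some edges.

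The main construction I would use is matching-based. Take a maximum matching $M$ and form $D$ by selecting exactly one endpoint of each matching edge, placing the other endpoint, together with every $M$-unmatched vertex, into $W=V(G)\setminus D$. Domination is then essentially free: each non-selected endpoint is dominated by its partner in $D$, and each unmatched vertex is independent from the other unmatched vertices (by maximality of $M$) and has all its neighbors matched, so it can be kept out of $D$ with its neighbors placed in $D$. If $U$ denotes the set of $M$-unmatched vertices, this yields $|W|=|U|+\tfrac12(n-|U|)=\tfrac12(n+|U|)\ge\frac n2$, hence $|D|\le\frac n2$, provided the locating property can be arranged.

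The heart of the argument is choosing, for each matching edge, \emph{which} endpoint goes into $D$ so that $W$ has pairwise distinct traces. Here I would first determine exactly when two out-vertices $u,v$ collide. Writing $u',v'$ for their partners (so $u',v'\in D$ and $u'\ne v'$), cubicness forces $N(u)\cap D$ and $N(v)\cap D$ to have size at most $3$, and a short case analysis shows that $N(u)\cap D=N(v)\cap D$ can occur only if $u\sim v'$ and $v\sim u'$, i.e.\ $uu'vv'$ is an $M$-alternating $4$-cycle properly $2$-coloured by the in/out partition. Crucially, the degenerate sub-cases (where the third neighbors of $u$ and $v$ coincide, or are both in $D$) would make $u$ and $v$ genuine twins of $G$ and are excluded by hypothesis; so the \emph{only} obstruction is an alternating $4$-cycle. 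This dovetails with the conjecture already being known for $C_4$-free graphs (case~(a)): in the cubic setting the $4$-cycles are precisely what must be defeated. I would then show that the orientation can be chosen to avoid every bad $4$-cycle simultaneously, flipping the in/out roles along matching edges (an alternating-path move) to repair a colliding pair, and arguing that these repairs can be made globally consistent.

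The step I expect to be the main obstacle is exactly this global consistency, compounded by the deficiency of $M$. Cubic graphs need not have a perfect matching, and the deficiency $|U|$ can be large; the unmatched vertices sit inside odd ``gadgets'' whose smallest instances create open twins, so the twin-free hypothesis must be leveraged precisely to control the Gallai--Edmonds structure around $U$ and to forbid the configurations that would otherwise both enlarge the deficiency and force unavoidable trace-collisions. At the same time, a single flip that fixes one bad $4$-cycle may create another, so proving that a collision-free orientation always exists will require a careful structural or inductive argument on $M$ (for instance, processing the $4$-cycles along the components of the symmetric difference of competing matchings). Marrying the matching-theoretic control of $U$ with the combinatorial control of the $4$-cycles, under the single hypothesis of twin-freeness, is where the bulk of the work will lie.
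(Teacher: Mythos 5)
Your setup coincides with the paper's: build the set from a maximum matching $M$ by taking one endpoint of each matching edge, and identify the $M$-alternating $4$-cycle as the unique obstruction to location (the paper's ``bad $(D,M)$-matched $4$-cycles''). But your endgame has a genuine gap. You propose to defeat all bad $4$-cycles purely by re-choosing \emph{which} endpoint of each matching edge enters $D$, keeping $|D|=\match(G)$ throughout. If that worked, you would have proved $\gL(G)\le\match(G)$ for all twin-free cubic graphs --- which the paper explicitly leaves open (Problem~4). The paper does not achieve global consistency by flips alone; it spends the slack $\frac n2-\match(G)=\frac{|U|}{2}$, where $U$ is the set of $M$-unmatched vertices, by \emph{adding extra vertices from $U$ to the set}. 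Your proposal never uses this budget and has no mechanism to, since all unmatched vertices are locked into $W$.

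Concretely, the difficulty your plan cannot absorb is the following. To keep every unmatched vertex $2$-dominated (without which unmatched vertices collide with each other or with matched ones), the endpoint of a matching edge that has an unmatched neighbor must be the one placed in $D$; this constraint can pin down \emph{both} vertices of $D$ on a bad $4$-cycle, so neither flip is available. The paper shows (via the Tutte--Berge structure theorem and several exchange lemmas) that the bad $4$-cycles organize into vertex-disjoint rooted trees under a dependency relation, whose leaves are exactly the $4$-cycles pinned by two distinct unmatched neighbors lying in $X$. It then builds an auxiliary subcubic graph $G^*$ on those unmatched vertices, takes a dominating set $D^*$ of $G^*$ with $|D^*|\le|V^*|/2$, adds $D^*$ to the set, and only then propagates the flips from each leaf back to its root; the inserted vertices of $D^*$ are what keep the unmatched vertices located after the flips. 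The final count is $\match(G)+|D^*|\le\match(G)+\frac{n-2\match(G)}{2}=\frac n2$. Your proposal is missing precisely this: the tree structure that certifies every chain of dependent $4$-cycles terminates at unmatched vertices, and the auxiliary-dominating-set device that pays for breaking the ``one endpoint per edge'' discipline there. Without it, ``arguing that these repairs can be made globally consistent'' is not a proof step but the entire (and possibly false, in the $|D|=\match(G)$ form) remaining problem.
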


We start by giving some definitions and notations in Section~\ref{sec:def-not}, and we prove Theorem~\ref{thm:main} in Section~\ref{sec:main}. The essence of our proof of Theorem~\ref{thm:main} is to apply the Tutte-Berge Formula and use matching theory in order to obtain certain desired structures of a cubic graph that will enable us to construct locating-dominating sets of size at most one-half the order of the graph.

\section{Definitions and notation}\label{sec:def-not}

For notation and graph theory terminology, we in general follow~\cite{hhs1}. Specifically, let $G$ be a graph with vertex set $V(G)$, edge set $E(G)$ and with no isolated vertex. The \emph{open neighborhood} of a vertex $v \in V(G)$ is $N_G(v) = \{u \in V \, | \, uv \in E(G)\}$ and its \emph{closed neighborhood} is the set $N_G[v] = N_G(v) \cup \{v\}$. The degree of $v$ is $d_G(v) = |N_G(v)|$. If the graph $G$ is clear from the context, we simply write $V$, $E$, $N(v)$, $N[v]$ and $d(v)$ rather than $V(G)$, $E(G)$, $N_G(v)$, $N_G[v]$  and $d_G(v)$, respectively. Two distinct vertices $u$ and $v$ of a graph $G$ are \emph{open twins} if $N(u)=N(v)$ and \emph{closed twins} if $N[u]=N[v]$. Further, $u$ and $v$ are \emph{twins} in $G$ if they are open twins or closed twins in $G$. A graph is \emph{twin-free} if it has no twins. We use the standard notation $[k] = \{1,2,\ldots,k\}$.

Given a set $F$ of edges, we will denote by $G-F$ the subgraph obtained from $G$ by deleting all edges of $F$. For a set $S$ of vertices, $G-S$ is the graph obtained from $G$ by removing all vertices of $S$ and removing all edges incident to vertices of $S$. The subgraph induced by $S$ is denoted by $G[S]$. A \emph{cycle} on $n$ vertices is denoted by $C_n$ and a \emph{path} on $n$ vertices by $P_n$.
An \emph{odd component}~\label{oddcompt} of $G$ is a component of $G$ of odd order. The number of odd components of $G$ is denoted by $\oc(G)$.

A set $D$ is a dominating set of $G$ if $N[v] \cap D \ne \emptyset$ for every vertex $v$ in $G$, or, equivalently, $N[S] = V(G)$. Two distinct vertices $u$ and $v$ in $V(G) \setminus D$ are \emph{located}  by $D$ if they have distinct neighbors in $D$; that is, $N(u) \cap D \ne N(v) \cap D$. If a vertex $u \in V(G) \setminus D$ is located from every other vertex in $V(G)\setminus D$, we simply say that $u$ is \emph{located}  by $D$.
For $k \ge 1$ if $X$ is a set of vertices in $G$ and $x \in V(G) \setminus X$, then the vertex $x$ is said to be $k$-dominated by $X$ if $x$ has exactly $k$ neighbors inside $X$; that is, $|N(x) \cap X|= k$.

A set $S$ is a \emph{locating set} of $G$ if every two distinct vertices outside $S$ are located by $S$. In particular, if $S$ is both a dominating set and a locating set, then $S$ is a locating-dominating set. Further, if $S$ is both a total dominating set and a locating set, then $S$ is a \emph{locating-total dominating set} (where $S$ is a \emph{total dominating set} of $G$ if every vertex of $G$ is adjacent to some vertex in $S$). We remark that the only difference between a locating set and a locating-dominating set in $G$ is that a locating set might have a unique non-dominated vertex.

An \emph{independent set} in $G$ is a set of vertices no two of which are adjacent. 
Two distinct edges in a graph $G$ are \emph{independent} if they are not adjacent in $G$ (i.e., the two edges are not incident with a common vertex).  A set of pairwise independent edges of $G$ is called a \emph{matching} in $G$.
A matching of maximum cardinality in $G$ is called a \emph{maximum matching} in $G$. The number of edges in a maximum matching of a graph $G$ is called the \emph{matching number} of $G$, denoted by $\match(G)$. Let $M$ be a specified matching in a graph $G$. A vertex $v$ of $G$ is an \emph{$M$-matched vertex} if $v$ is incident with an edge of $M$; otherwise, $v$ is an \emph{$M$-unmatched vertex}. If the matching $M$ is clear from context, we simply call a $M$-matched vertex a \emph{matched vertex} and a $M$-unmatched vertex an \emph{unmatched vertex}.

\section{Proof of Theorem~\ref{thm:main}}\label{sec:main}

In this section, we present a proof of Theorem~\ref{thm:main}. Our proof relies heavily on matching theory in graphs. We begin with some useful definitions and lemmas related to matchings.

\subsection{Useful definitions and lemmas}

We shall need the following theorem of Berge~\cite{Berge} about the matching number of a graph, which is sometimes referred to as the Tutte-Berge formulation for the matching number. Recall that $\oc(G)$ denotes the number of odd components in a graph $G$.

\begin{thm}{\rm (Tutte-Berge Formula)}
For every graph $G$,
\[
\match(G) = \min_{X \subseteq V(G)} \frac{1}{2} \left( |V(G)| +
|X| - \oc(G - X)  \right).
\]
\label{Berge}
\end{thm}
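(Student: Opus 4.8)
The statement to prove is the Tutte--Berge formula, an equality between $\match(G)$ and a minimum over vertex subsets; it is cleaner to work with the equivalent \emph{deficiency} form. Writing $n = |V(G)|$ and $d(G) = \max_{X \subseteq V(G)} \left( \oc(G-X) - |X| \right)$, the claimed identity is equivalent to $n - 2\match(G) = d(G)$, that is, a maximum matching leaves exactly $d(G)$ vertices uncovered. My plan is to prove the two inequalities $n - 2\match(G) \ge d(G)$ and $n - 2\match(G) \le d(G)$ separately, and then rearrange.

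First I would dispose of the easy inequality $n - 2\match(G) \ge d(G)$, equivalently $\match(G) \le \frac12(n + |X| - \oc(G-X))$ for every $X$. Fix a maximum matching $M$ and any $X \subseteq V(G)$. Every edge of $M$ either lies inside a single component of $G-X$ or meets $X$. Consequently each odd component $C$ of $G-X$ contains at least one vertex that is either $M$-unmatched or matched by $M$ to a vertex of $X$, since the vertices of $C$ cannot be perfectly matched among themselves ($|V(C)|$ is odd). The set $X$ can absorb at most $|X|$ of these ``deficient'' components, so at least $\oc(G-X) - |X|$ vertices remain unmatched; taking the maximum over $X$ gives the bound. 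This direction is routine.

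The substance is the reverse inequality, for which I would reduce to Tutte's $1$-factor theorem (a classical tool invoked here): a graph $H$ has a perfect matching if and only if $\oc(H - S) \le |S|$ for all $S \subseteq V(H)$. Set $d = d(G)$ and note the parity fact $\oc(G-X) \equiv n - |X| \pmod 2$, whence $d \equiv n \pmod 2$ and $n + d$ is even. Form $G'$ by adding a set $U$ of $d$ new vertices, each joined to every other vertex of $G'$ (so $G' = G \vee K_d$). I would then verify Tutte's condition for $G'$: for $S \subseteq V(G')$, if $U \not\subseteq S$ some vertex of $U \setminus S$ is adjacent to all of $G'-S$, forcing $G'-S$ to be connected and hence $\oc(G'-S) \le 1 \le |S|$ (the subcase $S = \emptyset$ being handled by the parity of $n+d$, which makes the single component even); while if $U \subseteq S$, writing $X = S \cap V(G)$ gives $G'-S = G-X$ and $|S| = d + |X|$, so $\oc(G'-S) - |S| = \oc(G-X) - |X| - d \le 0$ by the definition of $d$. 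Thus $G'$ has a perfect matching $M'$, and deleting the $d$ vertices of $U$ leaves a matching of $G$ that omits at most $|U| = d$ vertices, giving $n - 2\match(G) \le d$.

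Combining the two inequalities yields $n - 2\match(G) = d(G)$, which rearranges to $\match(G) = \min_X \frac12(n + |X| - \oc(G-X))$, the stated formula. I expect the reverse direction to be the main obstacle: the crux is choosing the correct gadget $G \vee K_d$ together with the parity observation ensuring $G'$ has even order, and the short case analysis verifying Tutte's condition. A fully self-contained alternative that avoids citing Tutte would prove $n - 2\match(G) \le d(G)$ directly by induction on $n$, splitting into the case where some vertex $v$ is covered by every maximum matching (delete $v$, apply induction, and adjoin $v$ to the witness set) and the case where every vertex is missed by some maximum matching (where Gallai's lemma forces each component to be factor-critical, so $X = \emptyset$ is the witness); the delicate part there is Gallai's lemma itself, proved via alternating-path arguments on symmetric differences of maximum matchings.
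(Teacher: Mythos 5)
Your proposal is correct, but note that the paper does not prove this statement at all: it is quoted as a classical result of Berge (reference \cite{Berge}) and used as a black box, so there is no internal proof to compare against. What you have written is the standard textbook derivation of the Tutte--Berge formula. The easy inequality $\match(G)\le \frac12\left(|V(G)|+|X|-\oc(G-X)\right)$ via counting deficient odd components is fine, and the reverse inequality via the gadget $G\vee K_d$ is sound: the parity observation $\oc(G-X)\equiv n-|X|\pmod 2$ correctly guarantees that $n+d$ is even, and your two-case verification of Tutte's condition (some vertex of $U$ survives and makes $G'-S$ connected, versus $U\subseteq S$ where the definition of $d$ applies) is complete, including the $S=\emptyset$ subcase. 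The one thing to be aware of is that your argument outsources all the depth to Tutte's $1$-factor theorem, which is itself a substantial result; your sketched alternative via induction and Gallai's lemma would be the self-contained route, and would also yield the refined structural information recorded in the paper's Theorem~\ref{t:structure} (which the paper also cites rather than proves, and which the gadget argument does not directly deliver). As a verification of the stated formula, your proof is acceptable.
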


We shall also need the following structural result about maximum matchings in graphs which is a consequence of the proof of the Tutte-Berge Formula.

\begin{thm}{\rm (\cite{Berge})}
Let $G=(V,E)$ be a graph and let $X$ be a proper subset of vertices
of $G$ such that $(|V|+|X|-\oc(G-X))/2$ is minimum. If $M$ is a
maximum matching in $G$, then $|M| = (|V|+|X|-\oc(G-X))/2$ and there
are exactly $\oc(G - X) - |X|$ vertices that are $M$-unmatched.
Furthermore, if $M_X$ is the subset of edges of $M$ that belong to
$G - X$, then every vertex in $G - X$ is $M_X$-matched, except for
exactly one vertex from each odd component of $G - X$. If $U$
denotes this set of $\oc(G-X)$ vertices that are $M_X$-unmatched,
one from each odd component of $G - X$, then $X$ is $M$-matched to a
subset of vertices in $U$.
 \label{t:structure}
\end{thm}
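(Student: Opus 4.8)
The plan is to derive all four conclusions from the Tutte-Berge Formula (Theorem~\ref{Berge}) by a single careful count of how the edges of $M$ meet $X$ and the components of $G-X$. Write $d=\oc(G-X)$ and $k=|X|$. Since $X$ is chosen to minimise $\frac12(|V|+|X|-\oc(G-X))$, Theorem~\ref{Berge} gives $\match(G)=\frac12(|V|+k-d)$, and as $M$ is a maximum matching this value is exactly $|M|$, which is the first conclusion. The number of $M$-unmatched vertices is then $|V|-2|M|=|V|-(|V|+k-d)=d-k$, which is the second conclusion; I would note in passing that this is nonnegative because $\match(G)\le |V|/2$ forces $d\ge k$.

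For the structural conclusions I would classify the edges of $M$ by their endpoints: let $b$ be the number of edges of $M$ with both endpoints in $X$, let $a$ be the number with exactly one endpoint in $X$, and let $M_X$ be the edges lying entirely in $G-X$. These edges use $a+2b$ vertices of $X$, so $a+2b\le k$, and exactly $k-a-2b$ vertices of $X$ are $M$-unmatched. Restricting to $G-X$, within each component $C$ the edges of $M_X$ form a matching, so the number of $M_X$-unmatched vertices of $C$ has the same parity as $|C|$; in particular every odd component contributes at least one such vertex. Writing $q$ for the total number of $M_X$-unmatched vertices of $G-X$, this gives $q\ge d$. Each of the $a$ edges with one endpoint in $X$ joins $X$ to a vertex of $G-X$ that is necessarily $M_X$-unmatched, so these $a$ vertices are ``rescued'' while the remaining $q-a$ are $M$-unmatched.

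Combining the two sources of $M$-unmatched vertices yields the exact identity $(k-a-2b)+(q-a)=d-k$, that is, $q=d-2k+2a+2b$. Feeding in $q\ge d$ gives $a+b\ge k$, whereas $a+2b\le k$ together with $b\ge 0$ gives $a+b\le a+2b\le k$; hence $k\le a+b\le k$, which forces $b=0$ and $a=k$, and back-substitution forces $q=d$. The equality $q=d$ says precisely that every even component of $G-X$ is perfectly matched by $M_X$ and every odd component has exactly one $M_X$-unmatched vertex, which is the third conclusion; I would let $U$ denote this set of $d$ vertices. The equalities $b=0$ and $a=k$ say that every vertex of $X$ is $M$-matched and is matched to a vertex of $G-X$, and any such vertex must be $M_X$-unmatched and hence lie in $U$. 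Thus $X$ is $M$-matched to a subset of $U$, the final conclusion.

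The main obstacle, and the one place where any real idea is needed, is the double count in the last paragraph: one must simultaneously track $M$-unmatched vertices inside $X$ and inside $G-X$, and then recognise that the \emph{exact} deficiency value $d-k$, combined with the parity bound $q\ge d$ and the capacity bound $a+2b\le k$, is rigid enough to pin down $a$, $b$ and $q$ exactly rather than merely bound them. Everything else is bookkeeping that follows immediately from the Tutte-Berge Formula.
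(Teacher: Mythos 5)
Your proof is correct. Note that the paper itself does not prove this statement: it is quoted as a known consequence of the Tutte--Berge Formula and cited to Berge, so there is no in-paper argument to compare against. Your derivation is the standard one and every step checks out: the first two conclusions are immediate from the minimality of $X$ and $|V|-2|M|=d-k$; the double count $(k-a-2b)+(q-a)=d-k$ correctly accounts for the $M$-unmatched vertices inside $X$ and inside $G-X$ (using that a vertex of $G-X$ matched into $X$ cannot also be covered by $M_X$, and that the $a$ such vertices are distinct because $M$ is a matching); and the squeeze $k\le a+b\le a+2b\le k$ rigidly forces $b=0$, $a=k$, $q=d$, from which the component-wise parity bound (each odd component contributes at least one $M_X$-unmatched vertex, each even component an even number) pins down exactly one $M_X$-unmatched vertex per odd component and none in even components. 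The final conclusion that $X$ is matched into $U$ then follows since every vertex of $X$ is matched to a vertex of $G-X$ that is necessarily $M_X$-unmatched. This is a clean, self-contained replacement for the citation.
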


The structure described in Theorem~\ref{t:structure} is illustrated in
Figure~\ref{fig:matching}.

\begin{figure}[ht]
\centering
  \scalebox{0.8}{\begin{tikzpicture}[join=bevel,inner sep=0.6mm,line width=0.8pt, scale=0.4]

      \foreach \i in {0,1,2,4}{
        \path (0,3.5*\i) node[draw,fill, shape=circle] (x\i) {};
        \path (4,2.5*\i-3) node[draw,fill, shape=circle] (om\i) {};
        \draw[line width = 2pt] (x\i) -- (om\i);
        \draw[line width=1.2pt] (6.3,2.5*\i-3) ellipse(3.3cm and 1cm) node{};
        \foreach \j in {0,3}{
          \path (4+1+\j,2.5*\i+0.5-3) node[draw,fill, shape=circle] (om\i\j-t) {};
          \path (4+1+\j,2.5*\i-0.5-3) node[draw,fill, shape=circle] (om\i\j-b) {};
          \draw[line width = 2pt] (om\i\j-t) -- (om\i\j-b);
          \path (6.5,2.5*\i-3) node {$\ldots$};
        }
      }

      \path (x2)+(0,3) node[rotate=90] {$\ldots$};
      \path (x2)+(6.5,-2.5) node[rotate=90] {$\ldots$};
      \path (x4)+(6.5,4) node[rotate=90] {$\ldots$};
      \path (x0)+(0,-2) node {$X$};
      \draw[line width=1.2pt, draw = black, rounded corners] (-1,-3) rectangle ++(2,18.5);

      \foreach \i in {0,1,3}{
        \path (4,13+2.5*\i) node[draw,fill, shape=circle] (ou\i) {};
        \draw[line width=1.2pt] (6.3,13+2.5*\i) ellipse(3.3cm and 1cm) node{};
        \foreach \j in {0,3}{
          \path (4+1+\j,13+2.5*\i+0.5) node[draw,fill, shape=circle] (ou\i\j-t) {};
          \path (4+1+\j,13+2.5*\i-0.5) node[draw,fill, shape=circle] (ou\i\j-b) {};
          \draw[line width = 2pt] (ou\i\j-t) -- (ou\i\j-b);
          \path (6.5,13+2.5*\i) node {$\ldots$};
        }
      }

      \draw[line width=1pt, draw = black, rounded corners] (3.5,11.5) rectangle ++(1,10.5);
      \path (-5,21.6) node {$M$-unmatched vertices};
      \draw[->,line width=1.2pt] (0.5,21.5) -- (3.4,21.5);

      \draw[snake={brace},segment amplitude=5mm,mirror snake,line width=1pt] (10,-4) to (10,21.5);
      \path (16,9.5) node {odd components};
      \path (16,8.5) node {in $G-X$};


      \foreach \i in {0,1,3}{
        \draw[line width=1.2pt] (-6.5,2.5*\i) ellipse(3cm and 1cm) node{};
        \foreach \j in {0,3}{
          \path (-5-\j,2.5*\i+0.5) node[draw,fill, shape=circle] (e\i\j-t) {};
          \path (-5-\j,2.5*\i-0.5) node[draw,fill, shape=circle] (e\i\j-b) {};
          \draw[line width = 2pt] (e\i\j-t) -- (e\i\j-b);
          \path (-6.5,2.5*\i) node {$\ldots$};
          }
      }

      \path (x2)+(-6.5,-2) node[rotate=90] {$\ldots$};

      \draw[snake={brace},segment amplitude=5mm,line width=1pt] (-10,-1.5) to (-10,9);
      \path (-16,4.5) node {even components};
      \path (-16,3.5) node {in $G-X$};


      \draw[line width=0.5pt]
      (e03-b) .. controls +(0.6,-2) and +(-0.6,-2) .. (x0)
      (e00-t) .. controls +(3,0) and +(0,-2) .. (x1)
      (e10-t) .. controls +(-0.5,1.2) and +(0.5,1.2) .. (e13-t)
      (e10-b) .. controls +(3,0) and +(0,1) .. (x0)
      (e30-b) .. controls +(3,0) and +(0,1) .. (x1)
      (e30-b) .. controls +(1,1) and +(-1,1) .. (x2)
      (e30-t) .. controls +(-0.5,1.2) and +(0.5,1.2) .. (e33-t)
      (e30-b) .. controls +(-0.5,-1.2) and +(0.5,-1.2) .. (e33-b)
      (e33-t) .. controls +(1,5) and +(-3,0) .. (x4)

      (x1)--(x2)
      (om2)--(x1) (om2)--(x0) (om20-t)--(x2) (x0)--(om1) (x0)--(om00-t)
      (om00-b) .. controls +(0.5,-1.2) and +(-0.5,-1.2) .. (om03-b)
      (om0)--(om00-t) (om10-b)--(om1)--(om10-t) (om2)--(om20-b) (om4)--(om40-t)
      (om40-t) .. controls +(0.5,1.2) and +(-0.5,1.2) .. (om43-t)

      (ou3) .. controls +(-3,0) and +(0,5) .. (x4)
      (ou10-t) .. controls +(-3,2) and +(1,2) .. (x4)
      (ou00-b) .. controls +(0,-3) and +(1,2) .. (x2)
      (ou30-t) .. controls +(0.5,1.2) and +(-0.5,1.2) .. (ou33-t)
      (ou0)--(ou00-t) (ou1)--(ou10-b) (ou3)--(ou30-b);
  \end{tikzpicture}}
  \caption{Example of the structure of a graph with maximum matching $M$ (thickened edges) with respect to a given set $X$.}
  \label{fig:matching}
\end{figure}
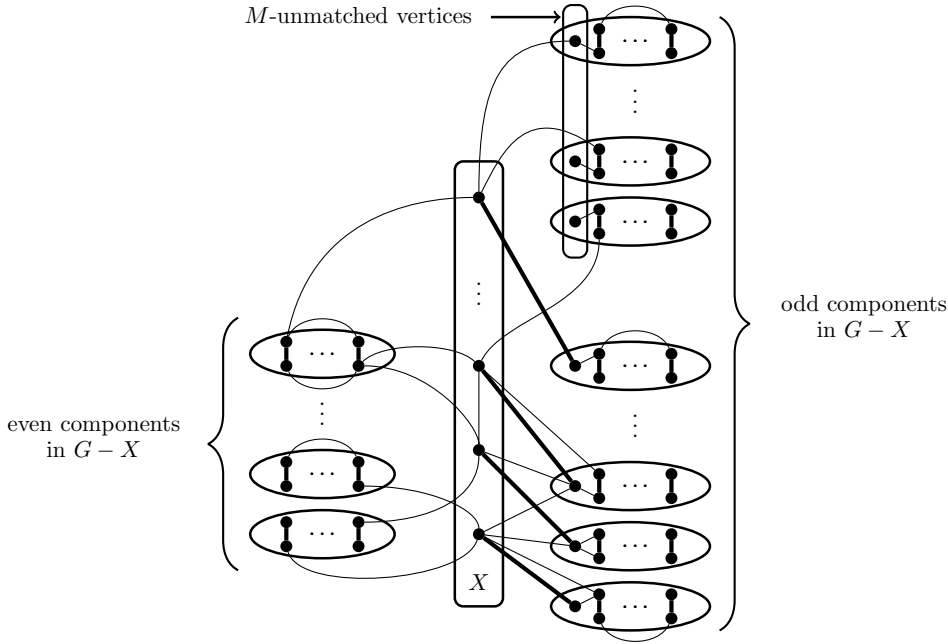

\noindent\textbf{Definition of the set $\cD_G(M)$.} Let $G$ be a graph and let $M$ be a maximum matching of $G$. We define $\cD_G(M)$ to be the collection of all sets $D$ of vertices such that the following holds: 
\begin{itemize}
\item For every edge $uv \in M$, if exactly one of $u$ and $v$ has a neighbor that is $M$-unmatched, then the vertex in $\{u,v\}$ with an $M$-unmatched neighbor belongs to $D$.
\item For every edge $uv \in M$, if neither $u$ nor $v$ has an $M$-unmatched neighbor or if both $u$ and $v$ have a (common) $M$-unmatched neighbor, then exactly one of $u$ and $v$ belongs to $D$.
\end{itemize}
If the graph $G$ is clear from the context, we simply write $\cD(M)$ rather than $\cD_G(M)$.

\noindent\textbf{Definition of a $D$-bad pair.}
Given a set $D \subseteq V(G)$, we define a \emph{$D$-bad pair} of vertices as two vertices in $V(G) \setminus D$ that are not located by $D$. If the set $D$ is clear from the context, we simply write that a pair of vertices is a \emph{bad pair} rather than a $D$-bad pair.

In our proof, we will use the following lemmas.

\begin{lem}\label{lem:M-unmatched-2dom}
Let $G$ be a cubic graph, let $M$ be a maximum matching of $G$, and let $D\in\cD_G(M)$. Then, $D$ is a dominating set of $G$, and each $M$-unmatched vertex is dominated by at least two vertices of~$D$.
\end{lem}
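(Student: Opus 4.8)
The plan is to first isolate the single structural fact about maximum matchings that drives everything: since $M$ is maximum, $G$ has no $M$-augmenting path. In particular no two $M$-unmatched vertices are adjacent, and for every edge $uv\in M$ the endpoints $u$ and $v$ cannot have \emph{distinct} $M$-unmatched neighbors $p\ne q$, since $p\,u\,v\,q$ would be an augmenting path. Consequently exactly one of three mutually exclusive situations holds for each edge $uv\in M$: neither endpoint has an $M$-unmatched neighbor; exactly one does; or both do, in which case they share a single common $M$-unmatched neighbor. These are precisely the alternatives in the definition of $\cD_G(M)$, so the definition is exhaustive, and in every case it guarantees that at least one endpoint of each edge of $M$ lies in $D$ (and in the ``exactly one'' case the endpoint forced into $D$ is the one carrying the unmatched neighbor).

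For domination, I would split on whether a vertex $w\notin D$ is matched. If $w$ is $M$-matched with partner $p$, then the guarantee above puts some endpoint of the edge $wp$ into $D$; since $w\notin D$ that endpoint is $p$, so $w$ is dominated. If $w$ is $M$-unmatched then, as $G$ is cubic, $w$ has three neighbors, all $M$-matched (no two unmatched vertices are adjacent). Taking any such neighbor $x$ with partner $y$ and applying the trichotomy: either $y$ has no unmatched neighbor, forcing $x\in D$, or $x$ and $y$ share $w$ as their common unmatched neighbor, forcing one of $x,y$ into $D$; in either case the resulting vertex of $D$ is adjacent to $w$. This already exhibits a dominator of $w$, so $D$ is a dominating set.

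For the sharper ``at least two'' claim at an $M$-unmatched vertex $w$, the key observation is that at most one edge of $M$ can lie entirely inside the three-vertex neighborhood $N(w)=\{x_1,x_2,x_3\}$, since two disjoint edges need four vertices. If no such internal edge exists, then each $x_i$ has its partner $y_i\notin N(w)$, and the trichotomy forces each $x_i\in D$ (the alternative would put $y_i\in N(w)$), giving three distinct dominators. If exactly one internal edge $x_1x_2$ exists, then $w$ is their common unmatched neighbor, contributing one dominator $z\in\{x_1,x_2\}$, while the remaining edge $x_3y_3$ has $y_3\notin N(w)$ and so forces $x_3\in D$; as $z$ and $x_3$ belong to different matching edges they are distinct, yielding two dominators.

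I expect the main obstacle to be bookkeeping rather than depth. The crucial point is verifying that the three situations in the definition of $\cD_G(M)$ are genuinely exhaustive, which is exactly where maximality of $M$ is invoked, and then, in the double-domination count, ensuring the two guaranteed dominators of $w$ are distinct vertices. The cleanest way to secure distinctness is to note that the dominators come from distinct edges of $M$, which is automatic once one knows $N(w)$ contains at most one internal matching edge.
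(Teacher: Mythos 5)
Your proof is correct and follows essentially the same route as the paper's: matched vertices are dominated because each matching edge contributes an endpoint to $D$, and an $M$-unmatched vertex has three matched neighbors meeting at least two distinct edges of $M$, each of which forces a neighbor of that vertex into $D$. You merely spell out two points the paper leaves implicit (that maximality of $M$ makes the trichotomy in the definition of $\cD_G(M)$ exhaustive, and the pigeonhole giving two distinct matching edges in $N(w)$), so no substantive difference.
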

\begin{proof}
It follows readily from the two properties of sets $D \in \cD_G(M)$, that every $M$-matched vertex is dominated by $D$. If $x$ is an $M$-unmatched vertex, then since $G$ is cubic, the vertex $x$ is adjacent to two $M$-matched vertices that are incident with distinct edges, $e_1$ and $e_2$ say, of $M$. Hence, by the construction of $D$, the set $D$ contains a neighbor of $x$ incident with $e_1$ and a neighbor of $x$ incident with $e_2$. Thus, $x$ is dominated by at least two vertices of $D$.
\end{proof}

\begin{lem}\label{lem:2dom}
Let $G$ be a twin-free, cubic graph, let $M$ be a maximum matching of $G$, and let $D\in\cD_G(M)$. Then, the vertices of each $D$-bad pair are $2$-dominated.
\end{lem}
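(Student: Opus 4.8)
The plan is to let $\{u,v\}$ be an arbitrary $D$-bad pair and show that each of $u$ and $v$ is $2$-dominated. Since $u,v\in V(G)\setminus D$ are not located by $D$, we have $N(u)\cap D=N(v)\cap D$; write $S:=N(u)\cap D=N(v)\cap D$ and $k:=|S|$. As $G$ is cubic, $k\in\{0,1,2,3\}$, and the goal is to rule out $k\in\{0,1,3\}$. First I would dispose of the two extreme values. The value $k=0$ is impossible because, by Lemma~\ref{lem:M-unmatched-2dom}, $D$ is a dominating set, so the vertex $u\notin D$ must have a neighbor in $D$. The value $k=3$ is impossible because then $N(u)=S=N(v)$ (all three neighbors of each of $u,v$ lie in $S$), so $u$ and $v$ would be open twins, contradicting the assumption that $G$ is twin-free.

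It remains to rule out $k=1$, which I expect to carry the main content. Suppose $S=\{w\}$. I would first observe that both $u$ and $v$ are $M$-matched: if, say, $u$ were $M$-unmatched, then Lemma~\ref{lem:M-unmatched-2dom} would give $u$ at least two neighbors in $D$, contradicting $k=1$. Let $u'$ and $v'$ be the $M$-partners of $u$ and $v$. The key step is to show that the $M$-partner of a matched vertex lying outside $D$ must itself lie in $D$; granting this, $u'\in D$ and $v'\in D$, so $u',v'\in N(u)\cap D=N(v)\cap D=\{w\}$, forcing $u'=w=v'$. But then $uw=uu'$ and $vw=vv'$ are two distinct edges of $M$ (distinct since $u\neq v$) sharing the vertex $w$, contradicting that $M$ is a matching. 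This contradiction eliminates $k=1$ and leaves $k=2$, as required.

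The hard part is therefore the claim that, for each edge $uu'\in M$ with $u\notin D$, we have $u'\in D$; equivalently, that every matching edge has at least one endpoint in $D$, which is exactly the fact underlying the domination statement of Lemma~\ref{lem:M-unmatched-2dom}. I would prove it by a short case analysis on the two defining properties of $\cD_G(M)$, organized by how many endpoints of $uu'$ have an $M$-unmatched neighbor. If neither endpoint has an $M$-unmatched neighbor, the second property puts exactly one of $u,u'$ in $D$, hence $u'\in D$; if exactly one endpoint does, the first property places that endpoint in $D$, and since $u\notin D$ this endpoint must be $u'$. The only remaining possibility is that both $u$ and $u'$ have $M$-unmatched neighbors, and here I would invoke the maximality of $M$: if $u$ and $u'$ had \emph{distinct} unmatched neighbors $z$ and $z'$, then $z,u,u',z'$ would be an $M$-augmenting path, contradicting that $M$ is maximum. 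Hence the two unmatched neighbors coincide, we are in the ``common'' case of the second property, and again exactly one of $u,u'$ — namely $u'$ — lies in $D$. This augmenting-path observation is the one genuinely non-routine ingredient: it is what guarantees that the second property of $\cD_G(M)$ is exhaustive on a maximum matching and thus that $u'\in D$ in every case.
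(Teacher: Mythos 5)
Your proof is correct and follows essentially the same route as the paper's: rule out $3$-domination via open twins, and rule out $0$- and $1$-domination using Lemma~\ref{lem:M-unmatched-2dom} together with the fact that an $M$-matched vertex outside $D$ has its $M$-partner in $D$ (so two $1$-dominated matched vertices would be located by their distinct partners). The only difference is that you make explicit, via the length-three augmenting path, why the case analysis in the definition of $\cD_G(M)$ is exhaustive for a \emph{maximum} matching --- a point the paper's proof leaves implicit.
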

\begin{proof}
Let $\{u,v\}$ be a $D$-bad pair. If $u$ and $v$ were $3$-dominated by $D$, then they would be open twins, a contradiction. Hence, $u$ and $v$ are dominated by at most two vertices of $D$. By Lemma~\ref{lem:M-unmatched-2dom}, only $M$-matched vertices can be $1$-dominated by $D$, but if $u$ and $v$ are $M$-matched vertices, they would each be dominated by the vertex of $D$ that they are matched to under~$M$ and would therefore not form a $D$-bad pair, a contradiction. Hence, $u$ and $v$ are $2$-dominated by $D$.
\end{proof}

\begin{lem}\label{lem:matched}
Let $G$ be a twin-free, cubic graph. Among all maximum matchings $M$ of $G$ and all sets $D\in\cD_G(M)$, let the matching $M_0$ and the set $D_0\in\cD_G(M_0)$ be chosen so that the number of $D_0$-bad pairs is minimum. Then, the vertices of each $D_0$-bad pair are $M_0$-matched vertices.
\end{lem}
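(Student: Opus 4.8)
The plan is to proceed by contradiction: suppose that some $D_0$-bad pair $\{u,v\}$ contains an $M_0$-unmatched vertex. By Lemma~\ref{lem:2dom} we already know $u$ and $v$ are each $2$-dominated by $D_0$, and the defining property of $\cD_{G}(M_0)$ forces the set $D_0$ to behave rigidly around $M_0$-unmatched vertices. The strategy is to show that from this configuration we can modify either the matching $M_0$ or the set $D_0$ so as to strictly decrease the number of bad pairs, contradicting the minimality in the choice of $(M_0, D_0)$.

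First I would analyze the local structure at an $M_0$-unmatched vertex $x$ lying in a bad pair. Since $G$ is cubic and $x$ is unmatched, all three neighbors of $x$ are $M_0$-matched, and they sit on (at least two, in fact necessarily distinct) matching edges. Because $x$ is only $2$-dominated (by Lemma~\ref{lem:2dom}), exactly one of the matching edges incident to the neighbors of $x$ has the property that its $D_0$-endpoint is not a neighbor of $x$; equivalently, the defining rule for $\cD_G(M_0)$ that concerns ``exactly one of $u,v$ has an $M$-unmatched neighbor'' pins down which endpoints are in $D_0$. The key point will be to record precisely how $x$ and its partner $y$ in the bad pair relate to these matching edges, and to identify a matching edge or a $D_0$-membership choice that we are free to toggle.

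The heart of the argument is the swap. I would consider the edge $xa$ from $x$ to a matched neighbor $a$, whose $M_0$-partner is $b$, and examine the augmenting-type exchange that replaces the matching edge $ab$ by $xa$ (so $x$ becomes matched and $b$ becomes unmatched); since $M_0$ is maximum this cannot increase $|M_0|$, so the resulting $M'$ is again a maximum matching with the same number of unmatched vertices. One then builds a set $D' \in \cD_G(M')$ that agrees with $D_0$ as much as possible, and argues that the bad pair $\{u,v\}$ is destroyed (because $x$ is now matched and hence dominated by its partner, or because the membership relations that caused the non-location are broken) while no new bad pair is created. Establishing that no new bad pairs appear is the delicate bookkeeping step: one must check that the only vertices whose domination pattern in $D'$ differs from that in $D_0$ are those incident to the swapped edges, and that twin-freeness together with the $2$-domination constraint of Lemma~\ref{lem:2dom} prevents these vertices from forming fresh unlocated pairs.

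The main obstacle I anticipate is precisely this control over newly created bad pairs after the swap. Toggling a matching edge changes which endpoint is forced into $D'$, and a vertex that was previously $1$-dominated or safely located may shift its neighborhood-in-$D'$ to coincide with that of some other vertex. To handle this cleanly I would use the freedom in choosing $D' \in \cD_G(M')$ (the second bullet in the definition lets us pick which endpoint of certain matching edges lies in $D'$) to repair any incipient collision, and I would lean on the fact that $x$ was only $2$-dominated to guarantee there is a genuinely ``free'' neighbor to work with. If a direct single swap proves too fragile, the fallback plan is to argue more globally by considering the alternating path/structure from Theorem~\ref{t:structure} anchored at the set $X$ achieving the Tutte--Berge minimum, using the guaranteed matching of $X$ into the odd-component representatives to route the exchange so that only matched vertices remain in any surviving bad pair.
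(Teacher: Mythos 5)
There is a genuine gap, and it lies in your central mechanism. The paper's proof never touches the matching: it fixes $M_0$, takes the Tutte--Berge set $X$ from Theorem~\ref{t:structure}, and splits into cases according to where the two common $D_0$-neighbors $x,y$ of the bad pair $\{u,v\}$ (with $u$ the $M_0$-unmatched vertex) sit relative to $X$ and to the odd component containing $u$. Two of the three cases end in immediate contradictions ($u$ and $v$ would be closed twins, or $u$ would be $3$-dominated); only the remaining case needs an exchange, and there the configuration is first pinned down completely ($v$ is the $M_0$-partner of $x$, and the third neighbor of $u$ is the partner $y'$ of $y$). The exchange is then a swap \emph{inside} $\cD_G(M_0)$: replace $y$ by $y'$ in $D_0$, which is legal precisely because $y$ and $y'$ share the $M_0$-unmatched neighbor $u$, so the second bullet of the definition leaves the choice of endpoint free. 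This is what you relegate to a secondary ``repair'' tool, while your primary move --- rotating the matching so that $u$ becomes matched --- is not only unnecessary but can backfire.

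Concretely, in the critical configuration suppose you replace $xv\in M_0$ by $ux$ (so $u$ becomes matched and $v$ becomes unmatched). Under the new matching $M'$, the vertex $u$ is no longer unmatched, so on the edge $yy'$ the second bullet no longer applies: $y$ is adjacent to the newly unmatched $v$ while $y'$ may have no $M'$-unmatched neighbor at all, and then the first bullet \emph{forces} $y\in D'$. Likewise $x\in D'$ is forced. Both $u$ and $v$ then see exactly $\{x,y\}$ in $D'$ and remain an unlocated pair, and the freedom you were counting on to separate them has been destroyed by the very swap you performed. Rotating along $uy$ instead avoids this but creates a new $M'$-unmatched vertex $y'$ whose domination pattern must then be controlled --- the ``delicate bookkeeping'' you acknowledge but do not carry out. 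Moreover, without first running the case analysis against $X$, you cannot even establish the local facts (that $v$ is matched to one common dominator and that $u$'s third neighbor is the partner of the other) on which any such verification depends. So the proposal as written does not close the argument, and its main idea points in a direction that fails in the one case where an exchange is actually needed.
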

\begin{proof}Let $X$ be a proper subset of vertices of $G$ such that $(|V|+|X|-\oc(G-X))/2$ is minimum. The structure of the graph $G$ with respect to the matching $M_0$ and the set $X$ is described in Theorem~\ref{t:structure}.
Let $\{u,v\}$ be an arbitrary $D_0$-bad pair. Suppose to the contrary that they are not both $M_0$-matched vertices. By Lemma~\ref{lem:2dom}, both $u$ and $v$ are $2$-dominated by $D_0$. By the definition of a set in $\cD_G(M_0)$, if both $u$ and $v$ are $M_0$-unmatched and $2$-dominated, they would be open twins, a contradiction. Therefore, exactly one of $u$ and $v$ is $M_0$-unmatched. Renaming $u$ and $v$ if necessary, we may assume that $u$ is $M_0$-unmatched.
Thus, by Theorem~\ref{t:structure}, the vertex $u$ belongs to an odd component, $C_u$ say, of $G-X$. Let $x$ and $y$ be the two common neighbors of $u$ and $v$ in $D_0$. Let $x'$ and $y'$ be the vertices $M_0$-matched to $x$ and $y$, respectively.

Suppose that both $x$ and $y$ belong to $C_u$. If $v \notin V(C_u)$, then $v \in X$. But then the vertex that is $M_0$-matched to $v$ belongs to $D_0$, implying that $v$ would be $3$-dominated by $D_0$, contradicting Lemma~\ref{lem:2dom}. Hence, $v \in V(C_u)$. If $v$ is matched to neither $x$ nor $y$ by $M_0$, then, once again, $v$ would be $3$-dominated by $D_0$, a contradiction. Hence, $v$ is $M_0$-matched to either $x$ or $y$. Renaming $x$ and $y$ if necessary, we may assume that $xv \in M_0$, and so $v = x'$. If $u$ and $v$ are adjacent, then $u$ and $v$ would be closed twins, a contradiction. Hence, $u$ and $v$ are not adjacent. The third neighbor of $u$, different from $x$ and $y$, is therefore the vertex $y'$ that is $M_0$-matched to $y$ (otherwise, by the definition of $\cD_G(M)$, $u$ would be $3$-dominated, a contradiction). We now consider the set $D' = (D_0\setminus \{y\}) \cup \{y'\}$.

We note that $y$ and $y'$ have a common $M_0$-unmatched neighbor, namely $u$, implying that $D' \in \cD(M_0)$. Suppose there is a vertex $z$ different from $u$ that is adjacent to both $x$ and $y'$. Then, $z$ is either in $X$ or in $C_u$. In both cases, $z$ is $M_0$-matched. If $z = v$, then $u$ and $z$ are open twins, a contradiction. Since $N(y) = \{u,v,y'\}$ while $z$ is adjacent to $x$, we note that $z \ne y$. Therefore, $z \notin \{v,y\}$ and the $M_0$-matched neighbor of $z$ is in $D'$, implying that $z$ is $3$-dominated by $D'$. Hence, the vertex $u$ is the only vertex dominated only by $x$ and $y'$ in $D'$ and it is therefore located by $D'$. Moreover, both $v$ and $y$ are $1$-dominated by $D'$, and are therefore located by $D'$ by Lemma~\ref{lem:2dom}. Finally, no other vertex has been affected by the removal of $y$ from $D_0$. Hence, the number of $D'$-bad pairs is strictly less than the number of $D_0$-bad pairs, contradicting our choice of $D_0$. Therefore, at most one of $x$ and $y$ belong to $C_u$.

Suppose that exactly one of $x$ and $y$ belongs to $C_u$. Renaming $x$ and $y$ if necessary, we may assume that $x \in V(C_u)$. Then, $y \in X$. If $v \in X$ or if $v \in V(C_u) \setminus \{x'\}$, then $v$ would be $3$-dominated by $D_0$, a contradiction. Hence, $v = x'$, and so $v$ is $M_0$-matched to $x$. Since $u$ is $2$-dominated by $D_0$, the vertex $u$ is adjacent to either $v$ or $y'$. Since $y'$ belongs to a component of $G - X$ different from $C_u$, the vertex $u$ is adjacent to $v$. But then $u$ and $v$ are closed twins, a contradiction.

Therefore, both $x$ and $y$ belong to $X$. This implies that $u$, $x'$ and $y'$ belong to three different components of $G - X$. In particular, $u$ is adjacent to neither $x'$ nor $y'$, implying that the third neighbor of $u$ different from $x$ and $y$ is an $M_0$-matched vertex and therefore belongs to the set $D_0$ by the construction of sets in $\cD_G(M_0)$. Thus, $u$ is then $3$-dominated by $D_0$, a contradiction. This completes the proof of the lemma.
\end{proof}

Note that in any twin-free, cubic  graph $G$, every $4$-cycle is an induced $4$-cycle. The following structure will play an important role in our proof.

\noindent\textbf{Definition of a bad $(D,M)$-matched $4$-cycle.}
Let $C \colon u_Cu'_Cv_Cv_C'u_C$ be a $4$-cycle in a (twin-free cubic) graph $G$, $M$ a matching of $G$, and $D$ a subset of vertices of $G$. We say that $C$ is a \emph{bad $(D,M)$-matched $4$-cycle} if $u_Cu'_C \in M$, $v_Cv'_C \in M$, $D \cap V(C) = \{u'_C,v'_C\}$ and $v_C$ is adjacent to exactly two vertices of $D$ (and so, $N(v_C) \cap D = \{u'_C,v'_C\}$).

Given two bad $(D,M)$-matched $4$-cycles, $A$ and $B$, we say that $A$ is \emph{dependent} on $B$ via the vertex $u'_A$ or $v'_A$ if $u_B$ is adjacent to $u'_A$ or to $v'_A$, respectively. An illustration is given in Figure~\ref{fig:4cycles}. We note that if $A$ is dependent on $B$, then $u_B$ is $3$-dominated by $D$.

\begin{figure}[ht]
\centering
  \scalebox{1.0}{\begin{tikzpicture}[join=bevel,inner sep=0.6mm,line width=0.8pt, scale=0.4]

\path (0,0) node[draw, shape=circle] (uA) {};
\draw (uA) node[below=0.1cm] {$u_A$};
\path (uA)+(2,0) node[draw, shape=circle] (vA) {};
\draw (vA) node[below=0.1cm] {$v_A$};
\path (vA)+(1.5,0) node[draw, shape=circle] (xA) {};
\path (uA)+(0,2) node[draw, fill, shape=circle] (u'A) {};
\draw (u'A) node[above=0.1cm] {$u'_A$};
\path (vA)+(0,2) node[draw, fill, shape=circle] (v'A) {};
\draw (v'A) node[above=0.1cm] {$v'_A$};

\draw[line width=0.5pt] (uA)--(v'A) (xA)--(vA)--(u'A) (uA)--++(-1,0)  (v'A)--++(1,0);
\draw[line width=2pt] (uA)--(u'A) (vA)--(v'A);

\path (-3,0) node[draw, shape=circle] (uB) {};
\draw (uB) node[below=0.1cm] {$u_B$};
\path (uB)+(-2,0) node[draw, shape=circle] (vB) {};
\draw (vB) node[below=0.1cm] {$v_B$};
\path (uB)+(0,2) node[draw, fill, shape=circle] (u'B) {};
\draw (u'B) node[above=0.1cm] {$u'_B$};
\path (vB)+(0,2) node[draw, fill, shape=circle] (v'B) {};
\draw (v'B) node[above=0.1cm] {$v'_B$};
\path (vB)+(-1.5,0) node[draw, shape=circle] (xB) {};

\draw[line width=0.5pt] (uB)--(v'B) (xB)--(vB)--(u'B)  (u'B)--++(1,0)  (v'B)--++(-1,0) (uB) .. controls +(1,0) and +(-1,0) .. (u'A);
\draw[line width=2pt] (uB)--(u'B) (vB)--(v'B);

  \end{tikzpicture}}
  \caption{Two bad $(D,M)$-matched $4$-cycles $A$ and $B$, where $A$ is dependent
    on $B$ via $u'_A$. Edges of $M$ are thickened. Black vertices
    belong to $D$, and white vertices do not.}
  \label{fig:4cycles}
\end{figure}
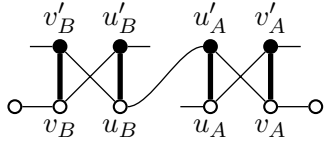

Given a set $S$ of vertex-disjoint bad $(D,M)$-matched $4$-cycles of a graph $G$, let $\overrightarrow{G}(S)$ be the digraph with vertex set $S$ and where $(A,B)$ is an arc in $\overrightarrow{G}(S)$ if $A$ is dependent on $B$. We remark that since $G$ is cubic, every vertex in $\overrightarrow{G}(S)$ has out-degree at most~$2$. Further by definition of a bad $(D,M)$-matched $4$-cycle, every vertex in $\overrightarrow{G}(S)$ has in-degree at most~$1$.

Given a rooted tree $T$ with root $r$, by an \emph{orientation of $T$} we mean orienting every arc of $T$ from a parent to its child.

\subsection{Proof of the main result}

We are now in a position to prove our main result, namely Theorem~\ref{thm:main}. Recall its statement.

\begin{unnumbered}{Theorem~\ref{thm:main}}
If $G$ is a twin-free, cubic  graph of order~$n$, then $\gL(G)\le \frac{n}{2}$.
\end{unnumbered}
\begin{proof}[\textbf{Proof of Theorem~\ref{thm:main}.}] Among all maximum matchings $M$ of $G$ and all sets $D \in \cD_G(M)$, we choose the matching $M_0$ and the set $D_0\in\cD_G(M_0)$ so that the number of $D_0$-bad pairs is minimum.
Let $X$ be a proper subset of vertices of $G$ such that $(|V|+|X|-\oc(G-X))/2$ is minimum. The structure of the graph $G$ with respect to the matching $M_0$ and the set $X$ is described in Theorem~\ref{t:structure}.

We now describe the structure of $D_0$-bad pairs:

\begin{clm}\label{clm:bad-pair-structure}
Every $D_0$-bad pair $\{u,v\}$ belongs to a common bad $(D_0,M_0)$-matched $4$-cycle, say $R$. Further, there exists a set $S_{u,v}$ of vertex-disjoint bad $(D_0,M_0)$-matched $4$-cycles containing $R$ such that the following holds: \1 \\
\indent {\rm (a)} For every $4$-cycle $C \in S_{u,v}$ and every vertex $x \in \{u'_C,v'_C\}$, either $x$ is adjacent to an \\ \hspace*{0.9cm} $M_0$-unmatched vertex in $G$, or $C$ is dependent on some other $C' \in S_{u,v}$ via the vertex $x$. \1 \\
\indent {\rm (b)}
$\overrightarrow{G}(S_{u,v})$ is an oriented tree rooted at $R$. \1 \\
\indent {\rm (c)} For every $4$-cycle $C \in S_{u,v}$, if both $u'_C$ and $v'_C$ have an $M_0$-unmatched neighbor, then these \\ \hspace*{0.9cm} neighbors are distinct and $\{u'_C,v'_C\} \subseteq X$.
\end{clm}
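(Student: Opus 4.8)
The plan is to prove the three parts in the order (base $4$-cycle) $\to$ (growth of $S_{u,v}$, giving (a) and (b)) $\to$ (c), treating the first assertion as a statement about \emph{every} bad-pair-minimizing pair $(M,D)$ and not only about $(M_0,D_0)$. For the base $4$-cycle, start from a $D_0$-bad pair $\{u,v\}$. By Lemma~\ref{lem:matched} both vertices are $M_0$-matched and by Lemma~\ref{lem:2dom} both are $2$-dominated, so they share their dominators: $N(u)\cap D_0=N(v)\cap D_0=\{x,y\}$ with $x\ne y$. Since $D_0\in\cD_G(M_0)$, every $M_0$-edge has an endpoint in $D_0$, so the $M_0$-partner of $u$ lies in $D_0\cap N(u)=\{x,y\}$, and likewise for $v$; as a vertex cannot be matched twice, $u$ and $v$ are matched to the two distinct vertices $x$ and $y$. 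The induced $4$-cycle on $\{u,x,v,y\}$ then has its two $M_0$-edges as opposite edges, meets $D_0$ in exactly $\{x,y\}$, and has the $2$-dominated vertex $v$ in the role of $v_R$, so it is the desired bad $(D_0,M_0)$-matched $4$-cycle $R$. I also record that a bad-pair vertex has no $M_0$-unmatched neighbour, since otherwise the defining rule of $\cD_G(M_0)$ would have forced it into $D_0$.

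The engine of the construction is a swap valid at any minimizer $D\in\cD_G(M_0)$. Let $C$ be a bad $(D,M_0)$-matched $4$-cycle with $u_C$ $2$-dominated, so $\{u_C,v_C\}$ is a $D$-bad pair, and let $x=u'_C$ (the case $x=v'_C$ is symmetric). If $x$ has no $M_0$-unmatched neighbour, then neither does $u_C$, so $D'=(D\setminus\{x\})\cup\{u_C\}$ again lies in $\cD_G(M_0)$. Passing from $D$ to $D'$ deletes the pair $\{u_C,v_C\}$ (now $u_C\in D'$ and $v_C$ becomes $1$-dominated, hence located by Lemma~\ref{lem:2dom}), and the only vertex whose location can newly be spoiled is the third neighbour $w$ of $x$. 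Minimality of the number of bad pairs then forces a new bad pair to appear and to contain $w$. As $D'$ is itself a minimizer, the base step and Lemma~\ref{lem:matched} place $w=u_{C'}$ on a bad $(D',M_0)$-matched $4$-cycle $C'$ whose third dominator was exactly $x$, so $C$ is dependent on $C'$ via $x$; moreover the swap misses $V(C')$, making $C'$ vertex-disjoint from $C$. Beginning at $R$ with $D=D_0$ and applying this at each anchor that lacks an unmatched neighbour yields $S_{u,v}$ and establishes (a). Since each $u_{C'}$ has a unique free neighbour, every cycle has in-degree at most $1$ in $\overrightarrow{G}(S_{u,v})$; rooting at $R$ and orienting each arc from parent to child then gives the arborescence of (b).

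For (c), let $a$ and $b$ be the $M_0$-unmatched neighbours of $u'_C$ and $v'_C$. Distinctness $a\ne b$ follows from Lemma~\ref{lem:matched}: if $a=b$, then $a$ would share the dominators $\{u'_C,v'_C\}$ of the matched vertex $v_C$, making $\{a,v_C\}$ a bad pair with $a$ unmatched, which Lemma~\ref{lem:matched} forbids. For $\{u'_C,v'_C\}\subseteq X$ I would appeal to the Tutte--Berge structure of Theorem~\ref{t:structure}: the vertices $a$ and $b$ are $M_X$-unmatched representatives of odd components of $G-X$, and an anchor lying outside $X$ yet adjacent to such a representative would admit an $M_0$-alternating re-routing inside its odd component, producing a maximum matching that contradicts the extremal choice of $M_0$ among all maximum matchings; this forces both anchors into $X$.

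The step I expect to be the main obstacle is the bookkeeping around the swap. One must show that the bad pair created by deleting $x$ is carried by a \emph{single} fresh $4$-cycle; that this cycle is bad with respect to the original $D_0$ and not merely with respect to the locally swapped minimizer $D'$; that it is vertex-disjoint from every cycle already placed (disjointness from the immediate parent is automatic, as seen above, but disjointness from ancestors and siblings is not); and that attaching it creates no directed cycle, so that the recursion terminates in a genuine rooted tree. Controlling these global disjointness and acyclicity conditions, rather than any single swap, is where the real work lies.
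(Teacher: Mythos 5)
Your overall architecture (base $4$-cycle from Lemmas~\ref{lem:2dom} and~\ref{lem:matched}, then a swap $D'=(D\setminus\{u'_C\})\cup\{u_C\}$ whose minimality forces either an unmatched neighbour of $u'_C$ or a new dependent $4$-cycle, then a Tutte--Berge argument for (c)) matches the paper's. But there are concrete gaps. The most serious is in the swap bookkeeping: you assert that ``the only vertex whose location can newly be spoiled is the third neighbour $w$ of $x$.'' That is false as stated --- the vertex $x=u'_C$ itself leaves the set and must be located by $D'$, and it can a priori form a bad pair with the off-cycle neighbour of $u_C$ (both being $2$-dominated by $\{u_C, u''_C\}$ when $u''_C\in D$). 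The paper rules this out with a dedicated, nontrivial argument: it rotates the matching on the $4$-cycle to $M'=(M_0\setminus\{uu',vv'\})\cup\{uv',vu'\}$, builds a corresponding set in $\cD(M')$, and derives a contradiction through a case analysis on the off-cycle neighbour $y$ of $v$. Without this step your recursion is not justified, because the new bad pair need not be the one carried by $u''_C$. A second gap is part (c): your claim that an anchor outside $X$ adjacent to an unmatched representative would ``contradict the extremal choice of $M_0$'' has no force --- $M_0$ is extremal only for the number of bad pairs, and any alternating re-routing still yields a maximum matching. The paper's actual argument is purely structural: by Theorem~\ref{t:structure} the two unmatched neighbours lie in distinct odd components of $G-X$, vertices of $X$ are matched to component representatives whose components then contain no unmatched vertex, and chasing which of $u_C,v_C,u'_C,v'_C$ can lie in $X$ forces exactly $\{u'_C,v'_C\}\subseteq X$. (Your distinctness argument for (c) also misses the case where the common unmatched neighbour is $3$-dominated; the paper's one-line observation that $u'_C$ and $v'_C$ would be open twins is the right fix.)

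Finally, you explicitly defer the global disjointness of the cycles in $S_{u,v}$ and the acyclicity of $\overrightarrow{G}(S_{u,v})$, calling them ``where the real work lies.'' In the paper these are in fact short: any two cycles sharing a vertex must share a matching edge (all their vertices are matched within the cycles), which forces a $3$-dominated vertex outside $D_0$, a contradiction; and acyclicity follows because $R$ has in-degree $0$, every other node has in-degree exactly $1$ (in-degree $\ge 2$ would give $u_A$ degree $\ge 4$), and every node is reachable from $R$, so a directed cycle would force some node onto two distinct in-arcs. You have the in-degree bound but not the reachability half of the acyclicity argument. So the proposal follows the paper's route but leaves out the hardest single step (locating $u'_C$ after the swap), misargues the $X$-membership in (c), and leaves (b) incomplete.
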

\noindent\emph{Proof of Claim~\ref{clm:bad-pair-structure}.} Let $\{u,v\}$ be a $D_0$-bad pair. Thus, $u$ and $v$ are vertices outside $D_0$ that are not located by $D_0$. By Lemma~\ref{lem:2dom}, both $u$ and $v$ are $2$-dominated by $D_0$, and by Lemma~\ref{lem:matched}, both $u$ and $v$ are $M_0$-matched. Let $u'$ and $v'$ be the $M_0$-matched neighbors of $u$ and~$v$, respectively. Since $u$ and $v$ are $2$-dominated by $D_0$, we note that $u'$ and $v'$ are the two common neighbors of $u$ and $v$ in $D_0$. Thus, $C_{uv} \colon uu'vv'u$ is a bad $(D_0,M_0)$-matched $4$-cycle in $G$. As observed earlier, every $4$-cycle in $G$ is an induced $4$-cycle. Hence, let $x,y,u''$ and $v''$ be the  neighbors of $u,v,u'$ and $v'$, respectively, that do not belong to this $4$-cycle $C_{uv}$. Let $D_u = (D_0\setminus \{u'\}) \cup \{u\}$ and let $D_v = (D_0\setminus \{v'\}) \cup \{v\}$. We proceed further with the following series of subclaims.

\begin{subclm}\label{subclm:twocases}
The following holds.\1 \\
\indent {\rm (a)}  If $D_u \notin \cD_G(M_0)$, then $u''$ is an $M_0$-unmatched vertex that is not adjacent to $u$.\1 \\
\indent {\rm (b)}  If $D_u \in \cD_G(M_0)$, then the only $D_u$-bad pair that is not a $D_0$-bad pair is $\{u'',z\}$ for some  \\  \hspace*{0.9cm} vertex~$z$. Moreover, $u''$ and $z$ are part of a bad $(D_0,M_0)$-matched $4$-cycle $C$ of $G$, and $C_{uv}$ \\  \hspace*{0.9cm} is dependent on $C$ via the vertex $u'$.
\end{subclm}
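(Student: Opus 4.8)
Throughout I would exploit that $D_u$ is obtained from $D_0$ by replacing $u'$ with $u$, so the two sets differ only in the two vertices $u,u'$, where $uu'\in M_0$. For part~(a), the conditions defining $\cD_G(M_0)$ are imposed separately on each matching edge and depend only on which endpoints lie in the set; since only the status of $u$ and $u'$ has changed, $D_u$ can fail to belong to $\cD_G(M_0)$ only through the condition attached to the edge $uu'$. As $u\in D_u$ and $u'\notin D_u$, this condition is violated precisely when the rule forces $u'$, rather than $u$, into every member of $\cD_G(M_0)$, i.e.\ when exactly one of $u,u'$ has an $M_0$-unmatched neighbour and that vertex is $u'$. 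The neighbours of $u'$ are $u,v,u''$ and those of $u$ are $u',v',x$; since $u,v,v'$ are $M_0$-matched, the only possible unmatched neighbour of $u'$ is $u''$ and the only one of $u$ is $x$. Hence the violation forces $u''$ to be $M_0$-unmatched and $x$ to be $M_0$-matched, so $u''\neq x$; as moreover $u''\notin V(C_{uv})$ gives $u''\notin\{u',v'\}$, the vertex $u''$ is none of $u',v',x$ and so is not adjacent to $u$. This is exactly the conclusion of~(a).

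For part~(b), assuming $D_u\in\cD_G(M_0)$, I would first determine which vertices can enter a new bad pair. A vertex outside $D_u$ keeps its $D$-neighbourhood unless it is adjacent to $u$ or to $u'$, so the affected vertices outside $D_u$ are $u'$ itself, $x$ (gaining $u$), and $v,u''$ (losing $u'$). Since $u',v'$ are the common $D_0$-neighbours of $u$ and $v$, deleting $u'$ leaves $v$ dominated only by $v'$; thus $v$ is $1$-dominated in $D_u$ and, by Lemma~\ref{lem:2dom} applied to $D_u$, is located. Every new $D_u$-bad pair must therefore involve one of $u',x,u''$, and I would split according to whether the common $D_u$-neighbourhood of the pair contains $u$. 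If it does, both vertices are adjacent to $u$; as the only non-$D_u$ neighbours of $u$ are $u'$ and $x$, the pair is $\{u',x\}$. If it does not, the confusion must come from deleting $u'$, forcing one vertex to be a neighbour of $u'$ lying outside $D_u$; since $u$ is now in the set and $v$ is located, that vertex is $u''$ (the degenerate case $u''=x$ is excluded because then this vertex would merely exchange $u'$ for $u$ and no vertex could be confused with it, so $D_u$ would have fewer bad pairs, against minimality), giving a pair $\{u'',z\}$.

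The heart of~(b) is to exclude $\{u',x\}$, and here the minimality of $(M_0,D_0)$ is decisive; I expect designing the competitor to be the main obstacle. I would note that $\{u',x\}$ can be bad only when $u'$ is $2$-dominated in $D_u$, which, as $N(u')=\{u,v,u''\}$, forces $u''\in D_0$; then $x$ is $1$-dominated by $u''$ in $D_0$, so by Lemma~\ref{lem:M-unmatched-2dom} it is $M_0$-matched, necessarily to $u''$. Thus $u,u',u'',x$ carry the $M_0$-alternating $4$-cycle $u\,u'\,u''\,x$, with matching edges $uu'$ and $u''x$. Swapping $M_0$ along it yields a maximum matching $M'$, and $D'=(D_0\setminus\{u'\})\cup\{x\}$ lies in $\cD_G(M')$. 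Under $D'$ one computes that $u'$ and $v$ are each $1$-dominated, hence located, while $u$ becomes $2$-dominated by $\{v',x\}$; the only vertex that could be confused with $u$ is the third neighbour $x'$ of $x$, but that would make $x'$ be $1$-dominated by $v'$ and hence $M_0$-matched to $v'$, which is impossible since $vv'\in M_0$. So $D'$ has strictly fewer bad pairs than $D_0$, contradicting minimality; hence $u''\notin D_0$, $u'$ is located in $D_u$, and every new bad pair has the form $\{u'',z\}$.

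Finally I would recover the structure and the uniqueness of $z$. By Lemma~\ref{lem:2dom} the vertex $u''$ is $2$-dominated by $D_u$; as $u''$ is adjacent to $u'$ but not to $u$, all three of its neighbours lie in $D_0$, namely $u'$ together with two vertices $p,q$, and $N(u'')\cap D_u=\{p,q\}$. Any $z$ forming a bad pair with $u''$ is unaffected by the swap and satisfies $N(z)\cap D_0=\{p,q\}$, so $z$ is a common neighbour of $p$ and $q$; a third such common neighbour would exhaust the neighbourhoods of $p$ and $q$ and make them open twins, which is forbidden, so $z$ is unique. Thus the swap creates exactly the one new pair $\{u'',z\}$ and, by minimality, removes only $\{u,v\}$, so $D_u$ again attains the minimum. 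Applying Lemma~\ref{lem:matched} to the minimiser $D_u$ shows $u''$ and $z$ are $M_0$-matched; matching $u''$ to one of $p,q$ and $z$ to the other exhibits the bad $(D_0,M_0)$-matched $4$-cycle $C\colon u''\,p\,z\,q\,u''$ with $u_C=u''$ and bad pair $\{u'',z\}$. Since $u''$ is adjacent to $u'=u'_{C_{uv}}$, the vertex $u_C$ is adjacent to $u'_{C_{uv}}$, i.e.\ $C_{uv}$ is dependent on $C$ via $u'$, completing~(b).
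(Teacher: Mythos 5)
Your argument is correct, and its overall skeleton matches the paper's: part~(a) follows directly from the definition of $\cD_G(M_0)$; for part~(b) you identify the affected vertices, observe that $v$ becomes $1$-dominated and hence located, rule out the potential new bad pair $\{u',x\}$ by exhibiting a competitor with fewer bad pairs, and then extract the $4$-cycle $C$ and the dependency via Lemmas~\ref{lem:2dom} and~\ref{lem:matched}. The one place where you genuinely diverge is the competitor used to kill $\{u',x\}$. Both proofs first derive $u''\in D_0$ and $xu''\in M_0$, so that $u\,u'\,u''\,x\,u$ is an $M_0$-alternating $4$-cycle; the paper then rotates the matching around $C_{uv}$ itself, taking $M'=(M_0\setminus\{uu',vv'\})\cup\{uv',vu'\}$ with $D'=(D_0\setminus\{u'\})\cup\{v\}$, and reaches a contradiction through the outer neighbour $y$ of $v$ (using the membership condition of $\cD$ on the edge $vv'$ when $y$ is unmatched), whereas you rotate around the cycle $u\,u'\,u''\,x$, taking $D'=(D_0\setminus\{u'\})\cup\{x\}$, and reach the contradiction through the outer neighbour $x'$ of $x$ (which would be $1$-dominated by $v'$ in $D_0$ and hence forced to be $M_0$-matched to $v'$, impossible since $vv'\in M_0$). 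Your version is arguably slightly cleaner, since it never needs to discuss $M_0$-unmatched vertices adjacent to $v$; the paper's version keeps all modifications inside $C_{uv}$ and its immediate neighbourhood. Your closing uniqueness argument for $z$ (a third common neighbour of $p$ and $q$ would make them open twins) is also a nice explicit supplement to what the paper leaves implicit. Two small points to tighten: the parenthetical dismissal of the degenerate case $u''=x$ is hand-wavy as written --- the clean statement is that if $u''=x$ then any partner of $u''$ in a new bad pair must be adjacent to $u$, so the pair collapses to $\{u',x\}$, which you have already excluded; and when you assert that a matched vertex outside $D_0$ is dominated by its matching partner, you are (like the paper) relying on the convention that every set in $\cD_G(M_0)$ contains exactly one endpoint of each matching edge, which is worth stating once.
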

\noindent\emph{Proof of Claim~\ref{subclm:twocases}.} By definition of the sets in the family~$\cD_G(M_0)$, if $D_u \notin \cD_G(M_0)$, then $u''$ is an $M_0$-unmatched vertex and $u''$ is not adjacent to $u$, proving Statement~(a) of Claim~\ref{subclm:twocases}.

To prove Statement~(b), suppose that $D_u\in\cD_G(M_0)$. By our choice of $M_0$ and $D_0$, there are at least as many $D_u$-bad pairs as $D_0$-bad pairs. The only vertices that could potentially be negatively affected (in the sense that they are located by $D_0$ but not by $D_u$) by removing $u'$ from $D_0$ and replacing it with the vertex $u$ are $u'$, $u''$ and $v$. By Lemma~\ref{lem:2dom}, two vertices forming a $D_u$-bad pair are $2$-dominated by $D_u$. The vertex $v$ is $1$-dominated by $D_u$, and hence it is located by $D_u$.

Suppose that $u'$ is not located by $D_u$ from some other vertex outside $D_u$. Then, this vertex must be $x$, the neighbor of $u$ not on $C_{uv}$.  Considering the $D_u$-bad pair $\{u',x\}$, and noting that $u'$ and $x$ are $2$-dominated by $D_u$, we deduce that $u'' \in D_u$. If $x$ is $M_0$-unmatched, then by the definition of $\cD_G(M_0)$, we would have $u\in D_0$ and $u'\notin D_0$, a contradiction. Hence, $x$ is $M_0$-matched and its matched neighbor is in $D_u$. Since $x$ is $2$-dominated, we have $xu''\in M_0$. We now consider the maximum matching $M' = (M_0 \setminus \{uu',vv'\}) \cup \{uv',vu'\}$, and we let $D' = (D_0 \setminus \{u'\}) \cup \{v\}$.

We note that $u''x \in M'$. Since neither $u$ nor $u'$ has an $M'$-unmatched neighbor, we note that $D' \in \cD(M')$. The only vertices that could potentially be negatively affected (in the sense that they are located by $D_0$ but not by $D'$) by these changes are the vertices dominated by $u'$ in $D_0$ and that do not belong to $D'$. The only such vertices are $u$ and $u'$. By Lemma~\ref{lem:2dom}, if two vertices form a $D'$-bad pair, then they are $2$-dominated by $D'$. The vertex $u$ is $1$-dominated by $D'$, and hence it is located by $D'$. Thus, the vertex $u'$ is not located by $D'$ from some other vertex outside $D'$. Such a vertex must be adjacent to both $v$ and $u''$, and is therefore the neighbor of $v$ outside $C_{uv}$, namely the vertex~$y$.  If $x = y$, then $u$ and $v$ would be open twins, a contradiction. Hence, $x \ne y$. If $y$ is $M'$-matched, since $u''x\in M'$ and $u'v\in M'$, the vertex $y$ is $3$-dominated by $D'$, a contradiction. Hence, $y$ is $M'$-unmatched (and $M$-unmatched). Then, since $D_0\in\cD_G(M_0)$, the vertices $y$ and $v'$ are adjacent. Hence, $y$ is $3$-dominated by $D'$, a contradiction. Thus, $u'$ is located by $D_u$.

Therefore, among the vertices dominated by $u'$ in $D_0$, the vertex $u''$ is the only vertex that was located by $D_0$ but that is not located by $D_u$ from some other vertex, $z$ say, outside $D_u$. Thus, $\{u'',z\}$ is the only pair of vertices located by $D_0$ but not by $D_u$. Hence, the number of $D_u$-bad pairs is the same as the number of $D_0$-bad pairs (since $\{u,v\}$ is not a $D_u$-bad pair) and we can apply Lemma~\ref{lem:matched} to $D_u$ to deduce that $u''$ and $z$ are $M_0$-matched vertices.  By Lemma~\ref{lem:2dom}, both $u''$ and $z$ are $2$-dominated by $D_u$. Let $w$ and $t$ be the $M_0$-matched neighbors of $u''$ and~$z$, respectively. Since $u''$ and $z$ are $2$-dominated by $D_u$ and $D_u \in \cD_G(M_0)$, we note that $w$ and $t$ are the two common neighbors of $u''$ and $z$ in $D_u$. Thus, the $4$-cycle $C \colon u''wztu''$ is a bad $(D_0,M_0)$-matched $4$-cycle in $G$ with $u'' = u_C$, $w = u'_C$, $z = v_C$ and $t = v'_C$, and $C_{uv}$ is dependent on $C$ via the vertex $u'$. This establishes Statement~(b) of Claim~\ref{subclm:twocases}.~\smallqed

\medskip
Interchanging the roles of $u$ and $v$ in the proof of Claim~\ref{subclm:twocases}, we have the following analogous result for the vertex~$v$.

\begin{subclm}\label{subclm:twocases-v}
The following holds.\1 \\
\indent {\rm (a)} If $D_v \notin \cD_G(M_0)$, then $v''$ is an $M_0$-unmatched vertex that is not adjacent to $v$. \1 \\
\indent {\rm (b)} If $D_v \in \cD_G(M_0)$, then the only $D_v$-bad pair that is not a $D_0$-bad pair is $\{v'',z\}$ for some \\  \hspace*{0.9cm} vertex~$z$. Moreover, $v''$ and $z$ are part of a bad $(D_0,M_0)$-matched $4$-cycle $C$ of $G$, and $C_{uv}$ \\  \hspace*{0.9cm} is dependent on $C$ via the vertex $v'$.
\end{subclm}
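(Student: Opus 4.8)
The plan is to prove Claim~\ref{subclm:twocases-v} not by repeating the argument but by exhibiting a symmetry of the configuration under which it is identical to Claim~\ref{subclm:twocases}. First I would record the involution $\sigma$ acting on the relevant vertices by $u \leftrightarrow v$, $u' \leftrightarrow v'$, $u'' \leftrightarrow v''$, and $x \leftrightarrow y$. Under $\sigma$ the $4$-cycle $C_{uv} \colon uu'vv'u$ is fixed setwise, the matching edges $uu', vv' \in M_0$ are interchanged, and the two cycle-vertices $u', v'$ of $D_0$ are interchanged. The key observation, recorded at the very start of the proof of Claim~\ref{clm:bad-pair-structure}, is that \emph{both} $u$ and $v$ are $2$-dominated by $D_0$; thus, even though the definition of a bad $(D_0,M_0)$-matched $4$-cycle singles out the corner $v_C$ as the $2$-dominated one, in our situation this property holds symmetrically for $u$ and $v$, so the entire configuration feeding into Claim~\ref{subclm:twocases} is genuinely invariant under $\sigma$.

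Next I would verify that every auxiliary object appearing in the proof of Claim~\ref{subclm:twocases} transforms as expected. The set $D_u = (D_0 \setminus \{u'\}) \cup \{u\}$ is carried to $D_v = (D_0 \setminus \{v'\}) \cup \{v\}$; the auxiliary matching $M' = (M_0 \setminus \{uu',vv'\}) \cup \{uv',vu'\}$ is fixed by $\sigma$, since swapping the labels yields the same edge set; and the auxiliary set $D' = (D_0 \setminus \{u'\}) \cup \{v\}$ is carried to its $v$-analogue $(D_0 \setminus \{v'\}) \cup \{u\}$. Crucially, the minimality hypothesis defining the choice of $M_0$ and $D_0$ makes no reference to the labelling of any particular bad pair, so each appeal to that choice survives the relabelling unchanged. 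Replaying the proof of Claim~\ref{subclm:twocases} line by line under $\sigma$ then yields part~(a) (if $D_v \notin \cD_G(M_0)$ then $v''$ is $M_0$-unmatched and nonadjacent to $v$) and part~(b) (if $D_v \in \cD_G(M_0)$ then the unique new bad pair is $\{v'',z\}$, with $v''$ and $z$ lying on a bad $(D_0,M_0)$-matched $4$-cycle $C$ on which $C_{uv}$ depends via $v'$).

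The one point I would guard against, and the only genuine subtlety, is that the symmetry be real rather than merely formal. Because the definition of a bad $(D_0,M_0)$-matched $4$-cycle is asymmetric in its two unmatched corners $u_C$ and $v_C$, a blanket appeal to ``symmetry'' would be illegitimate if the configuration we start from were itself asymmetric. This is exactly what the remark that both $u$ and $v$ are $2$-dominated (via Lemmas~\ref{lem:2dom} and~\ref{lem:matched}) rules out: it certifies that the starting data for the bad pair $\{u,v\}$ is fixed by $\sigma$ up to relabelling, and hence that the derivation of Claim~\ref{subclm:twocases} can be transported verbatim with the roles of $u$ and $v$ interchanged. Once this is in place the argument is genuinely ``by symmetry'' and requires no further computation.
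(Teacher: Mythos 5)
Your proof is correct and takes essentially the same route as the paper, which disposes of this claim with the single sentence ``Interchanging the roles of $u$ and $v$ in the proof of Claim~\ref{subclm:twocases}''; your additional care in checking that the configuration (in particular the $2$-domination of both $u$ and $v$, and the invariance of $M'$ and the auxiliary sets) is genuinely symmetric is exactly the justification the paper leaves implicit.
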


Let $R$ denote the bad $(D_0,M_0)$-matched $4$-cycle $C_{uv} \colon uu'vv'u$, where $u = u_R$, $u' = u'_R$, $v = v_R$ and $v' = v'_R$. We now show the existence of a set $S_{u,v}$ of vertex-disjoint bad $(D_0,M_0)$-matched $4$-cycles containing $R$ such that conditions~(a) and~(b) in the statement of Claim~\ref{clm:bad-pair-structure} hold. If both $u'$ and $v'$ have an $M_0$-unmatched neighbor, then we let $S_{u,v} = \{R\}$ and we are done.

Otherwise, renaming vertices if necessary, we may assume, by Claim~\ref{subclm:twocases} and Claim~\ref{subclm:twocases-v}, that $D_u \in \cD_G(M_0)$ and that $R$ is dependent on a bad $(D_0,M_0)$-matched $4$-cycle $C$ via vertex $u'_R$.
Now, since by Claim~\ref{subclm:twocases}(b) the number of $D_u$-bad pairs is the same as the number of $D_0$-bad pairs (hence $D_u$ also minimizes the number of bad pairs), we can apply Claim~\ref{subclm:twocases} and Claim~\ref{subclm:twocases-v} to $C$, $D_u$ and to the $D_u$-bad pair $\{u_C,v_C\}$. This shows that each of $u'_C$ and $v'_C$ either have an $M_0$-unmatched neighbor, or $C$ is dependent on some other bad $(D_0,M_0)$-matched $4$-cycle via this vertex. Repeating this process as long as possible yields a set $S_{u,v}$ of bad $(D_0,M_0)$-matched $4$-cycles, where for each bad $(D_0,M_0)$-matched $4$-cycle in $S_{u,v}$ different from $R$ there is some other bad $(D_0,M_0)$-matched $4$-cycle in $S_{u,v}$ that depends on it, and satisfies the properties in Claim~\ref{subclm:twocases} and Claim~\ref{subclm:twocases-v}. This establishes Statement~(a) of Claim~\ref{clm:bad-pair-structure}. Moreover we have the following.

\begin{subclm}\label{subclm:vertex-disjoint}
Any two distinct $4$-cycles in $S_{u,v}$ are vertex-disjoint.
\end{subclm}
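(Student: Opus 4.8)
The plan is to turn ``sharing a vertex'' into ``sharing an $M_0$-edge'' and then to rule out each way two such edges can be identified. I would first record the reduction: in a bad $(D_0,M_0)$-matched $4$-cycle $C\colon u_Cu'_Cv_Cv'_Cu_C$ the edges $u_Cu'_C$ and $v_Cv'_C$ both lie in $M_0$, so every vertex of $C$ is $M_0$-matched to its partner on $C$. Hence, if two cycles of $S_{u,v}$ share a vertex $w$, its unique $M_0$-partner is the cycle-partner of $w$ in each of them, so both cycles contain the whole edge through $w$. Thus two cycles that meet share an entire $M_0$-edge, which in each is either the ``$u$-edge'' $u_Cu'_C$ or the ``$v$-edge'' $v_Cv'_C$, and it suffices to treat the three resulting configurations.

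Next I would dispose of every configuration in which the shared edge is a $v$-edge of at least one of the two cycles, using the defining property that $v_C$ is \emph{exactly} $2$-dominated with $N(v_C)\cap D_0=\{u'_C,v'_C\}$, together with uniqueness of $M_0$-partners. If the shared edge is the $v$-edge of both cycles, then $v_{C'}=v_{C''}$, exact $2$-domination gives $\{u'_{C'},v'_{C'}\}=\{u'_{C''},v'_{C''}\}$, the unique partner gives $v'_{C'}=v'_{C''}$, and then $u'_{C'}=u'_{C''}$ and $u_{C'}=u_{C''}$, so the cycles coincide. If the shared edge is the $u$-edge of one and the $v$-edge of the other, the common non-$D_0$ endpoint is simultaneously the vertex $u_{C'}$ of one cycle and the exactly-$2$-dominated vertex $v_{C''}$ of the other; listing its three neighbours, which all lie in $D_0$, and invoking exact $2$-domination forces two of them to coincide, which again collapses the two cycles into one. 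In each case the two cycles are equal, contradicting that they are distinct.

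The crux is the configuration in which the shared edge is the $u$-edge in both cycles, say $C'$ and $C''$ share it with $u_{C'}=u_{C''}=q$ and $u'_{C'}=u'_{C''}=p$; this is where I expect the real work. I would argue by induction on the order in which cycles are inserted into $S_{u,v}$ during the construction of part~(a): assuming the cycles present so far are pairwise vertex-disjoint, let $C'$ be the next one inserted and $C''$ one already present that it meets. If $v'_{C'}=v'_{C''}$ we are back in the $v$-edge case, so $v'_{C'}\ne v'_{C''}$, and then $q$ has three distinct neighbours $p,v'_{C'},v'_{C''}$, all in $D_0$; in particular $q$ is $3$-dominated, so $C''\ne R$, and $C''$ was inserted as a child of some earlier cycle $P$. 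Then $P$ depends on $C''$, so $q=u_{C''}$ is adjacent to a $D_0$-vertex of $P$; since $P$ and $C''$ are both present and hence disjoint by the induction hypothesis, that $D_0$-vertex is the neighbour of $q$ lying off $C''$, which is $v'_{C'}$. But $v'_{C'}$ also belongs to $C'$, so $P$ and $C'$ share the vertex $v'_{C'}\in D_0$ and therefore the whole edge $v_{C'}v'_{C'}$, the $v$-edge of $C'$; by the $v$-edge analysis above this forces $P=C'$, contradicting that $P$ was present before $C'$ was inserted. This completes the induction.

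I expect the $u$-edge/$u$-edge configuration to be the main obstacle, as it is the only one not settled by the purely local combination of exact $2$-domination, twin-freeness, and uniqueness of $M_0$-partners; it genuinely needs the recursive construction of $S_{u,v}$ together with the bookkeeping of which $D_0$-vertex of a parent cycle creates each child, whereas the $v$-edge configurations reduce at once to the two cycles being identical.
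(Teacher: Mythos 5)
Your proposal is correct, and its opening reduction is exactly the paper's: since every vertex of a bad $(D_0,M_0)$-matched $4$-cycle is $M_0$-matched to its partner on the cycle, two cycles of $S_{u,v}$ that meet must share an entire $M_0$-edge. From there the paper finishes in one line, asserting that the shared endpoint outside $D_0$ is $3$-dominated by $D_0$, ``a contradiction''. That one line is immediate precisely in your first two configurations, where the shared vertex is the exactly-$2$-dominated vertex $v_C$ of at least one of the two cycles. You are right to isolate the $u$-edge/$u$-edge configuration as the one requiring real work: for every non-root cycle $C\in S_{u,v}$ the vertex $u_C$ genuinely \emph{is} $3$-dominated by $D_0$ (it is adjacent to $u'_C$, $v'_C$, and the vertex of its parent through which the parent depends on $C$), so $3$-domination of $u_{C'}=u_{C''}$ is not by itself a contradiction, and the paper's proof is silent on exactly this point. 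Your induction on insertion order closes the case cleanly: $3$-domination rules out the root; the parent $P$ of the older cycle $C''$ depends on $C''$ through a $D_0$-vertex adjacent to $q=u_{C''}$, which by disjointness of $P$ and $C''$ (induction hypothesis) must be the unique off-$C''$ neighbour $v'_{C'}$ of $q$; hence $P$ and the newly inserted $C'$ share the $v$-edge of $C'$, which your earlier case analysis forbids. The only place worth a tightening remark is the mixed $u$-edge/$v$-edge case, where exact $2$-domination forces $V(C')=V(C'')$ rather than literally identical labellings; but since the claim concerns vertex-disjointness of distinct $4$-cycles, identifying a cycle with its vertex set there is harmless. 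In short, your argument takes the paper's route but supplies a step the paper elides, and is the more complete of the two.
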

\noindent\emph{Proof of Claim~\ref{subclm:vertex-disjoint}.}
Let $A:u_Au'_Av_Av'_A$ and $B:u_Bu'_Bv_Bv'_B$ be two distinct $4$-cycles of $S_{u,v}$. If they have a common vertex, since their vertices are pairwise $M_0$-matched, they must have two vertices in common, and these vertices must be $M_0$-matched to each other. But then, the vertex that belongs to both $A$ and $B$ but does not belong to $D_0$ must be $3$-dominated by $D_0$, a contradiction.~\smallqed

\medskip
Now, consider the digraph $\overrightarrow{G}(S_{u,v})$, which by Claim~\ref{subclm:vertex-disjoint} is well-defined. The following properties hold in the digraph $\overrightarrow{G}(S_{u,v})$. Recall that the distance from a vertex $x$ to a vertex $y$ in a directed graph $D$ is the minimum length among all directed paths from $x$ to $y$ in $D$.

\begin{subclm}\label{subclm:oriented-tree}
The following holds. \1 \\
\indent {\rm (a)} $R$ has in-degree~$0$ in $\overrightarrow{G}(S_{u,v})$.\\
\indent {\rm (b)} Every vertex in $\overrightarrow{G}(S_{u,v})$ different from $R$ has in-degree exactly~$1$. \\
\indent {\rm (c)} $\overrightarrow{G}(S_{u,v})$ has no directed cycle.
\end{subclm}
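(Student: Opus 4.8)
The plan is to prove the three parts in the order (a), (b), (c), exploiting two facts about dependencies that are already on the table: every vertex of $\overrightarrow{G}(S_{u,v})$ has in-degree at most~$1$ (noted when the digraph was defined), and the whole family $S_{u,v}$ was built from $R$ by repeatedly adjoining, for each cycle already present, the cycles it depends on. The observation driving everything is that if $A$ is dependent on $B$ via $u'_A$ (respectively $v'_A$), then $u_B$ is the \emph{off-cycle} neighbour of $u'_A$ (respectively $v'_A$); since the cycles of $S_{u,v}$ are pairwise vertex-disjoint (Claim~\ref{subclm:vertex-disjoint}) and $u'_A, v'_A \in D_0$, an arc of $\overrightarrow{G}(S_{u,v})$ is completely determined by where a single off-cycle neighbour lands.

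For part~(a), I would examine the only possible in-neighbour of $R$. An arc $(A,R)$ would mean $A$ is dependent on $R$, i.e.\ $u_R = u$ is adjacent to $u'_A$ or $v'_A$. But $u$ is one vertex of the bad pair $\{u,v\}$, so by Lemma~\ref{lem:2dom} it is exactly $2$-dominated, its two dominators being precisely $u' = u'_R$ and $v' = v'_R$; hence its third (off-cycle) neighbour $x$ lies outside $D_0$. Since $u'_A, v'_A \in D_0$ and, by vertex-disjointness, differ from $u',v' \in V(R)$, the vertex $u$ is adjacent to neither of them. Thus no cycle is dependent on $R$, and $R$ has in-degree~$0$.

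For part~(b), the upper bound is immediate: for a cycle $B$, the only vertex of $B$ that can witness an arc into $B$ is $u_B$, and the only neighbour of $u_B$ not already lying in $V(B)\cap D_0$ is its single off-cycle neighbour; that vertex belongs to at most one cycle of the disjoint family $S_{u,v}$, so it can serve as $u'_A$ or $v'_A$ for at most one cycle $A$, giving in-degree at most~$1$. For the lower bound I would invoke the construction directly: by the construction establishing Statement~(a) of Claim~\ref{clm:bad-pair-structure}, every cycle of $S_{u,v}$ other than $R$ was adjoined precisely because some already-present cycle depends on it, so it receives at least one arc. Combining the two bounds yields in-degree exactly~$1$.

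For part~(c), I would use that parts~(a) and~(b) already pin down the entire arc set: every arc is the unique in-arc of its (necessarily non-root) head, and each such arc is exactly the dependency that caused the head to be adjoined; hence every arc runs from a cycle added earlier in the construction to one added strictly later. A directed cycle would have to return to an earlier-added cycle, which is impossible; equivalently, since every cycle is reachable from $R$ and each non-root has a unique in-arc, the distance from $R$ strictly increases along every arc, so $\overrightarrow{G}(S_{u,v})$ is acyclic. The main obstacle I anticipate is bookkeeping the direction convention (an arc $(A,B)$ records that $A$ \emph{depends on} $B$, so $B$ is adjoined after $A$) and making precise that the construction introduces no dependency other than the one used to adjoin each new cycle — a point that follows cleanly once the in-degree bound of~$1$ from part~(b) is in hand.
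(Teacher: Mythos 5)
Your proposal is correct and follows essentially the same route as the paper: part (a) via the fact that a dependency on $R$ would force $u_R$ to have a third neighbour in $D_0$ (contradicting $2$-domination of the bad pair), part (b) by combining the degree-$3$/vertex-disjointness bound with the construction of $S_{u,v}$, and part (c) by deducing acyclicity from reachability from $R$ together with the in-degree-one property. The only cosmetic difference is that your acyclicity argument is phrased as distance from $R$ increasing along every arc, whereas the paper argues by contradiction with a vertex of a hypothetical directed cycle at minimum distance from $R$; these are the same argument in substance.
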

\noindent\emph{Proof of Claim~\ref{subclm:oriented-tree}.} To see that Statement~(a) holds, observe that if some bad $(D_0,M_0)$-matched $4$-cycle $C$ was dependent on $R$ say, on vertex $u_R$, then $u_R$ would be $3$-dominated by $D_0$, a contradiction.

For Statement~(b), we show firstly that $\overrightarrow{G}(S_{u,v})$ has maximum in-degree~$1$. Suppose to the contrary that for some bad $(D_0,M_0)$-matched $4$-cycle $A$ in $S_{u,v}$, there are two other bad $(D_0,M_0)$-matched $4$-cycles $B$ and $C$ of $S_{u,v}$ that are both dependent on $A$. Since $B$ is dependent on $A$, the vertex $u_A$ is adjacent to $u_B'$ or $v_B'$. Since $C$ is dependent on $A$, the vertex $u_A$ is adjacent to $u_C'$ or $v_C'$. Thus, the vertex $u_A$ has degree at least~$4$, a contradiction. We show secondly that $R$ is the only vertex in $\overrightarrow{G}(S_{u,v})$ with in-degree~$0$. As observed in the paragraph immediately preceding Claim~A.3, for each bad $(D_0,M_0)$-matched $4$-cycle in $S_{u,v}$ different from $R$ there is some other bad $(D_0,M_0)$-matched $4$-cycle in $S_{u,v}$ that depends on it. Therefore, every bad $(D_0,M_0)$-matched $4$-cycle in $S_{u,v}$ different from $R$ has in-degree at least~$1$ in $\overrightarrow{G}(S_{u,v})$. Therefore, by our earlier observations, every bad $(D_0,M_0)$-matched $4$-cycle in $S_{u,v}$ different from $R$ has in-degree exactly~$1$ in $\overrightarrow{G}(S_{u,v})$.

For Statement~(c), suppose to the contrary that $\overrightarrow{G}(S_{u,v})$ contains a directed cycle $C \colon C_1 C_2 \ldots C_kC_1$ for some $k \ge 2$. By Statement~(a), we know that $R$ has in-degree~0 and therefore cannot belong to this cycle. However, there is a (directed) path from $R$ to every other vertex in $\overrightarrow{G}(S_{u,v})$. Among all vertices in the directed cycle $C$, let $C_i$ be chosen so that the distance from $R$ to $C_i$ in $\overrightarrow{G}(S_{u,v})$ is minimum where $i \in [k]$. Let $P$ be a shortest (directed) path from $R$ to $C_i$ in $\overrightarrow{G}(S_{u,v})$ and let $B$ be the vertex that immediately precedes $C_i$ on the path $P$ (possibly, $B = R$). Since the distance from $R$ to $B$ in $\overrightarrow{G}(S_{u,v})$ is less than the distance from $R$ to $C_i$ in $\overrightarrow{G}(S_{u,v})$, the vertex $B$ does not belong to the directed cycle $C$. Therefore, $C_i$ has in-degree at least~$2$ in $\overrightarrow{G}(S_{u,v})$, contradicting Statement~(b). This completes the proof of the claim.~\smallqed

\medskip
By Claim~\ref{subclm:oriented-tree}(c), if there is a cycle in $\overrightarrow{G}(S_{u,v})$, it cannot be an oriented cycle. But then some vertex in that cycle must have in-degree at least~2, contradicting Claim~\ref{subclm:oriented-tree}(b). Hence, Claim~\ref{subclm:oriented-tree} implies that $\overrightarrow{G}(S_{u,v})$ is an oriented tree rooted at $R$, and we have proved Statement~(b) of Claim~\ref{clm:bad-pair-structure}.

It remains to prove Statement~(c) of Claim~\ref{clm:bad-pair-structure}. Let $C \in S_{u,v}$ be a bad $(D_0,M_0)$-matched $4$-cycle where both $u'_C$ and $v'_C$ have an $M_0$-unmatched neighbor, $u''$ and $v''$ say, respectively. These neighbors are clearly distinct, since otherwise $u'_C$ and $v'_C$ are open twins. By Theorem~\ref{t:structure}, the two $M_0$-unmatched vertices $u''$ and $v''$ belong to distinct odd components of $G-X$. Suppose to the contrary that $u_C \in X$. Then, by Theorem~\ref{t:structure}, $u'_C \notin X$ and $u_C'$ belongs to an odd component of $G - X$ that contains no $M_0$-unmatched vertex. However, $u_C'$ is adjacent to the $M_0$-unmatched vertex $u''$ which implies that $u_C'$ belongs to the same odd component of $G - X$ as the $M_0$-unmatched vertex $u''$, a contradiction. Hence, $u_C \notin X$. Analogously, $v_C \notin X$.

If neither $u'_C$ nor $v'_C$ belong to $X$, then $u''$ and $v''$ belong to the same components of $G-X$, a contradiction. Hence, renaming $u'_C$ and $v'_C$, if necessary, we may assume that $v_C' \in X$. Thus, $v_C$ belongs to an odd component of $G - X$ that contains no $M_0$-unmatched vertex. If $u_C' \notin X$, then $v_C$ would be in the same odd component of $G - X$ as the $M_0$-unmatched vertex $u''$, a contradiction. Hence, $u_C' \in X$. This establishes Statement~(c) of Claim~\ref{clm:bad-pair-structure} and completes the proof of Claim~\ref{clm:bad-pair-structure}.~$(\Box)$

An example of a subgraph of $G$ corresponding to a set $S_{u,v}$ that contains six bad $(M_0,D_0)$-matched $4$-cycles is illustrated in Figure~\ref{fig:tree}(a) (where the edges of $M_0$ are thickened, black vertices belong to $D_0$ and white vertices do not, and square white vertices are $M_0$-unmatched vertices).

\begin{figure}[ht]
  \centering
  \subfigure[The structure around the $D_0$-bad pair.]{
    \scalebox{0.8}{\begin{tikzpicture}[join=bevel,inner sep=0.6mm,line width=0.8pt, scale=0.4]

        \path (0,0) node[draw, shape=circle] (uR) {};
        \draw (uR) node[below=0.1cm] {$u_R$};
        \path (uR)+(-1.5,0) node[draw, shape=circle] (yR) {};
        \path (uR)+(2,0) node[draw, shape=circle] (vR) {};
        \draw (vR) node[below=0.1cm] {$v_R$};
        \path (vR)+(1.5,0) node[draw, shape=circle] (xR) {};
        \path (uR)+(0,2) node[draw, fill, shape=circle] (u'R) {};
        \draw (u'R) node[left=0.1cm] {$u'_R$};
        \path (vR)+(0,2) node[draw, fill, shape=circle] (v'R) {};
        \draw (v'R) node[right=0.1cm] {$v'_R$};
        \draw[line width=0.5pt] (uR)--(v'R) (xR)--(vR)--(u'R) (uR)--(yR) ;
        \draw[line width=2pt] (uR)--(u'R) (vR)--(v'R);

        \path (uR)+(-2,6) node[draw, shape=circle] (uC1) {};
        \draw (uC1) node[right=0.1cm] {$u_{C_1}$};
        \path (uC1)+(-2,0) node[draw, shape=circle] (vC1) {};
        \draw (vC1) node[below=0.1cm] {$v_{C_1}$};
        \path (uC1)+(0,2) node[draw, fill, shape=circle] (u'C1) {};
        \draw (u'C1) node[right=0.1cm] {$u'_{C_1}$};
        \path (vC1)+(0,2) node[draw, fill, shape=circle] (v'C1) {};
        \draw (v'C1) node[left=0.1cm] {$v'_{C_1}$};
        \path (vC1)+(-1.5,0) node[draw, shape=circle] (xC1) {};
        \draw[line width=0.5pt] (uC1)--(v'C1) (xC1)--(vC1)--(u'C1);
        \draw[line width=2pt] (uC1)--(u'C1) (vC1)--(v'C1);

        \path (uR)+(3,5) node[draw, shape=circle] (uC2) {};
        \draw (uC2) node[left=0.1cm] {$u_{C_2}$};
        \path (uC2)+(2,0) node[draw, shape=circle] (vC2) {};
        \draw (vC2) node[below=0.1cm] {$v_{C_2}$};
        \path (uC2)+(0,2) node[draw, fill, shape=circle] (u'C2) {};
        \draw (u'C2) node[left=0.1cm] {$u'_{C_2}$};
        \path (vC2)+(0,2) node[draw, fill, shape=circle] (v'C2) {};
        \draw (v'C2) node[right=0.1cm] {$v'_{C_2}$};
        \path (u'C2)+(-0.5,1.5) node[draw, shape=rectangle, minimum size=5pt] (u''C2) {};
        \path (v'C2)+(0.5,1.5) node[draw, shape=rectangle, minimum size=5pt] (v''C2) {};
        \path (vC2)+(1.5,0) node[draw, shape=circle] (xC2) {};
        \draw[line width=0.5pt] (uC2)--(v'C2)--(v''C2) (xC2)--(vC2)--(u'C2)--(u''C2);
        \draw[line width=2pt] (uC2)--(u'C2) (vC2)--(v'C2);

        \path (vC1)+(2,6) node[draw, shape=circle] (uC3) {};
        \draw (uC3) node[right=0.1cm] {$u_{C_3}$};
        \path (uC3)+(-2,0) node[draw, shape=circle] (vC3) {};
        \draw (vC3) node[below=0.1cm] {$v_{C_3}$};
        \path (uC3)+(0,2) node[draw, fill, shape=circle] (u'C3) {};
        \draw (u'C3) node[right=0.1cm] {$u'_{C_3}$};
        \path (vC3)+(0,2) node[draw, fill, shape=circle] (v'C3) {};
        \draw (v'C3) node[left=0.1cm] {$v'_{C_3}$};
        \path (vC3)+(-1.5,0) node[draw, shape=circle] (xC3) {};
        \path (v'C3)+(-0.5,1.5) node[draw, shape=rectangle, minimum size=5pt] (v''C3) {};
        \draw[line width=0.5pt] (uC3)--(v'C3)--(v''C3) (xC3)--(vC3)--(u'C3);
        \draw[line width=2pt] (uC3)--(u'C3) (vC3)--(v'C3);

        \path (vC1)+(7,6) node[draw, shape=circle] (uC4) {};
        \draw (uC4) node[left=0.1cm] {$u_{C_4}$};
        \path (uC4)+(2,0) node[draw, shape=circle] (vC4) {};
        \draw (vC4) node[below=0.1cm] {$v_{C_4}$};
        \path (uC4)+(0,2) node[draw, fill, shape=circle] (u'C4) {};
        \draw (u'C4) node[left=0.1cm] {$u'_{C_4}$};
        \path (vC4)+(0,2) node[draw, fill, shape=circle] (v'C4) {};
        \draw (v'C4) node[right=0.1cm] {$v'_{C_4}$};
        \path (u'C4)+(-0.5,1.5) node[draw, shape=rectangle, minimum size=5pt] (u''C4) {};
        \path (vC4)+(1.5,0) node[draw, shape=circle] (xC4) {};
        \draw[line width=0.5pt] (uC4)--(v'C4) (xC4)--(vC4)--(u'C4)--(u''C4);
        \draw[line width=2pt] (uC4)--(u'C4) (vC4)--(v'C4);

        \path (uC3)+(2,6) node[draw, shape=circle] (uC5) {};
        \draw (uC5) node[right=0.1cm] {$u_{C_5}$};
        \path (uC5)+(-2,0) node[draw, shape=circle] (vC5) {};
        \draw (vC5) node[below=0.1cm] {$v_{C_5}$};
        \path (uC5)+(0,2) node[draw, fill, shape=circle] (u'C5) {};
        \draw (u'C5) node[right=0.1cm] {$u'_{C_5}$};
        \path (vC5)+(0,2) node[draw, fill, shape=circle] (v'C5) {};
        \draw (v'C5) node[left=0.1cm] {$v'_{C_5}$};
        \path (vC5)+(-1.5,0) node[draw, shape=circle] (xC5) {};
        \path (v'C5)+(-0.5,1.5) node[draw, shape=rectangle, minimum size=5pt] (v''C5) {};
        \path (u'C5)+(0.5,1.5) node[draw, shape=rectangle, minimum size=5pt] (u''C5) {};
        \draw[line width=0.5pt] (uC5)--(v'C5)--(v''C5) (xC5)--(vC5)--(u'C5)--(u''C5);
        \draw[line width=2pt] (uC5)--(u'C5) (vC5)--(v'C5);


        \draw[line width=0.5pt] (u'R)--(uC1) (v'R)--(uC2) (v'C1)--(uC3) (u'C1)--(uC4) (u'C3)--(uC5) (v'C4) .. controls +(0,4) and +(4,0) .. (u''C5);

    \end{tikzpicture}}}
 \qquad
  \subfigure[The oriented tree $\protect\overrightarrow{G}(S_{u_R,v_R})$.]{
    \scalebox{0.8}{\begin{tikzpicture}[join=bevel,inner sep=0.6mm,line width=0.8pt, scale=0.4]

        \path (0,0) node (a) {~};
        \path (0,2) node[draw,rectangle, rounded corners,line width=1.2pt, minimum size=1cm] (R) {$R$};
        \path (R)+(-2,6) node[draw,rectangle, rounded corners,line width=1.2pt, minimum size=1cm] (C1) {$C_1$};
        \path (R)+(3,5) node[draw,rectangle, rounded corners,line width=1.2pt, minimum size=1cm] (C2) {$C_2$};
        \path (C1)+(0,6) node[draw,rectangle, rounded corners,line width=1.2pt, minimum size=1cm] (C3) {$C_3$};
        \path (C1)+(5,6) node[draw,rectangle, rounded corners,line width=1.2pt, minimum size=1cm] (C4) {$C_4$};
        \path (C3)+(2,6) node[draw,rectangle, rounded corners,line width=1.2pt, minimum size=1cm] (C5) {$C_5$};

        \draw[line width=1.2pt, ->] (R)--(C1);
        \draw[line width=1.2pt, ->] (C1)--(C3);
        \draw[line width=1.2pt, ->] (C3)--(C5);
        \draw[line width=1.2pt, ->] (R)--(C2);
        \draw[line width=1.2pt, ->] (C1)--(C4);
    \end{tikzpicture}}}
\qquad
  \subfigure[The modification of $D_0$ around $\{u_R,v_R\}$  when associating it with the leaf $C_5$ of $\protect\overrightarrow{G}(S_{u_R,v_R})$. The vertices of each circled pair get swapped in $D_0$.]{
    \scalebox{0.8}{\begin{tikzpicture}[join=bevel,inner sep=0.6mm,line width=0.8pt, scale=0.4]

        \path (0,0) node[draw, fill, shape=circle] (uR) {};
        \draw (uR) node[below right=0.12cm] {$u_R$};
        \path (uR)+(-1.5,0) node[draw, shape=circle] (yR) {};
        \path (uR)+(2,0) node[draw, shape=circle] (vR) {};
        \draw (vR) node[below right=0.05cm] {$v_R$};
        \path (vR)+(1.5,0) node[draw, shape=circle] (xR) {};
        \path (uR)+(0,2) node[draw, shape=circle] (u'R) {};
        \draw (u'R) node[left=0.2cm] {$u'_R$};
        \path (vR)+(0,2) node[draw, fill, shape=circle] (v'R) {};
        \draw (v'R) node[right=0.1cm] {$v'_R$};
        \draw[line width=0.5pt] (uR)--(v'R) (xR)--(vR)--(u'R) (uR)--(yR) ;
        \draw[line width=2pt] (uR)--(u'R) (vR)--(v'R);
        \draw[line width=1pt] (uR)+(0,1) ellipse(0.5cm and 2cm) node{};

        \path (uR)+(-2,6) node[draw, shape=circle] (uC1) {};
        \draw (uC1) node[right=0.1cm] {$u_{C_1}$};
        \path (uC1)+(-2,0) node[draw, fill, shape=circle] (vC1) {};
        \draw (vC1) node[below right=0.12cm] {$v_{C_1}$};
        \path (uC1)+(0,2) node[draw, fill, shape=circle] (u'C1) {};
        \draw (u'C1) node[right=0.1cm] {$u'_{C_1}$};
        \path (vC1)+(0,2) node[draw, shape=circle] (v'C1) {};
        \draw (v'C1) node[left=0.2cm] {$v'_{C_1}$};
        \path (vC1)+(-1.5,0) node[draw, shape=circle] (xC1) {};
        \draw[line width=0.5pt] (uC1)--(v'C1) (xC1)--(vC1)--(u'C1);
        \draw[line width=2pt] (uC1)--(u'C1) (vC1)--(v'C1);
        \draw[line width=1pt] (vC1)+(0,1) ellipse(0.5cm and 2cm) node{};

        \path (uR)+(3,5) node[draw, shape=circle] (uC2) {};
        \draw (uC2) node[left=0.1cm] {$u_{C_2}$};
        \path (uC2)+(2,0) node[draw, shape=circle] (vC2) {};
        \draw (vC2) node[below=0.1cm] {$v_{C_2}$};
        \path (uC2)+(0,2) node[draw, fill, shape=circle] (u'C2) {};
        \draw (u'C2) node[left=0.1cm] {$u'_{C_2}$};
        \path (vC2)+(0,2) node[draw, fill, shape=circle] (v'C2) {};
        \draw (v'C2) node[right=0.1cm] {$v'_{C_2}$};
        \path (u'C2)+(-0.5,1.5) node[draw, shape=rectangle, minimum size=5pt] (u''C2) {};
        \path (v'C2)+(0.5,1.5) node[draw, shape=rectangle, minimum size=5pt] (v''C2) {};
        \path (vC2)+(1.5,0) node[draw, shape=circle] (xC2) {};
        \draw[line width=0.5pt] (uC2)--(v'C2)--(v''C2) (xC2)--(vC2)--(u'C2)--(u''C2);
        \draw[line width=2pt] (uC2)--(u'C2) (vC2)--(v'C2);

        \path (vC1)+(2,6) node[draw, fill, shape=circle] (uC3) {};
        \draw (uC3) node[right=0.2cm] {$u_{C_3}$};
        \path (uC3)+(-2,0) node[draw, shape=circle] (vC3) {};
        \draw (vC3) node[below=0.1cm] {$v_{C_3}$};
        \path (uC3)+(0,2) node[draw, shape=circle] (u'C3) {};
        \draw (u'C3) node[right=0.2cm] {$u'_{C_3}$};
        \path (vC3)+(0,2) node[draw, fill, shape=circle] (v'C3) {};
        \draw (v'C3) node[left=0.1cm] {$v'_{C_3}$};
        \path (vC3)+(-1.5,0) node[draw, shape=circle] (xC3) {};
        \path (v'C3)+(-0.5,1.5) node[draw, shape=rectangle, minimum size=5pt] (v''C3) {};
        \draw[line width=0.5pt] (uC3)--(v'C3)--(v''C3) (xC3)--(vC3)--(u'C3);
        \draw[line width=2pt] (uC3)--(u'C3) (vC3)--(v'C3);
        \draw[line width=1pt] (uC3)+(0,1) ellipse(0.5cm and 2cm) node{};

        \path (vC1)+(7,6) node[draw, shape=circle] (uC4) {};
        \draw (uC4) node[left=0.1cm] {$u_{C_4}$};
        \path (uC4)+(2,0) node[draw, shape=circle] (vC4) {};
        \draw (vC4) node[below=0.1cm] {$v_{C_4}$};
        \path (uC4)+(0,2) node[draw, fill, shape=circle] (u'C4) {};
        \draw (u'C4) node[left=0.1cm] {$u'_{C_4}$};
        \path (vC4)+(0,2) node[draw, fill, shape=circle] (v'C4) {};
        \draw (v'C4) node[right=0.1cm] {$v'_{C_4}$};
        \path (u'C4)+(-0.5,1.5) node[draw, shape=rectangle, minimum size=5pt] (u''C4) {};
        \path (vC4)+(1.5,0) node[draw, shape=circle] (xC4) {};
        \draw[line width=0.5pt] (uC4)--(v'C4) (xC4)--(vC4)--(u'C4)--(u''C4) (v'C4) .. controls +(0,4) and +(4,0) .. (u''C5);
        \draw[line width=2pt] (uC4)--(u'C4) (vC4)--(v'C4);

        \path (uC3)+(2,6) node[draw, fill, shape=circle] (uC5) {};
        \draw (uC5) node[right=0.2cm] {$u_{C_5}$};
        \path (uC5)+(-2,0) node[draw, shape=circle] (vC5) {};
        \draw (vC5) node[below=0.1cm] {$v_{C_5}$};
        \path (uC5)+(0,2) node[draw, shape=circle] (u'C5) {};
        \draw (u'C5) node[right=0.2cm] {$u'_{C_5}$};
        \path (vC5)+(0,2) node[draw, fill, shape=circle] (v'C5) {};
        \draw (v'C5) node[left=0.1cm] {$v'_{C_5}$};
        \path (vC5)+(-1.5,0) node[draw, shape=circle] (xC5) {};
        \path (v'C5)+(-0.5,1.5) node[draw, shape=rectangle, minimum size=5pt] (v''C5) {};
        \path (u'C5)+(0.5,1.5) node[draw, shape=rectangle, minimum size=5pt] (u''C5) {};
        \draw[line width=0.5pt] (uC5)--(v'C5)--(v''C5) (xC5)--(vC5)--(u'C5)--(u''C5);
        \draw[line width=2pt] (uC5)--(u'C5) (vC5)--(v'C5);
        \draw[line width=1pt] (uC5)+(0,1) ellipse(0.5cm and 2cm) node{};

        \draw[line width=0.5pt] (u'R)--(uC1) (v'R)--(uC2) (v'C1)--(uC3) (u'C1)--(uC4) (u'C3)--(uC5);

    \end{tikzpicture}}}

  \caption{Example of a $D_0$-bad pair $\{u_R,v_R\}$ with the set of
    bad $(D_0,M_0)$-matched $4$-cycles $S_{u_R,v_R}=\{R,C_1,\ldots,C_5\}$. The edges of $M_0$
    are thickened; squared vertices are $M_0$-unmatched; black vertices belong to $D_0$.}
  \label{fig:tree}
\end{figure}
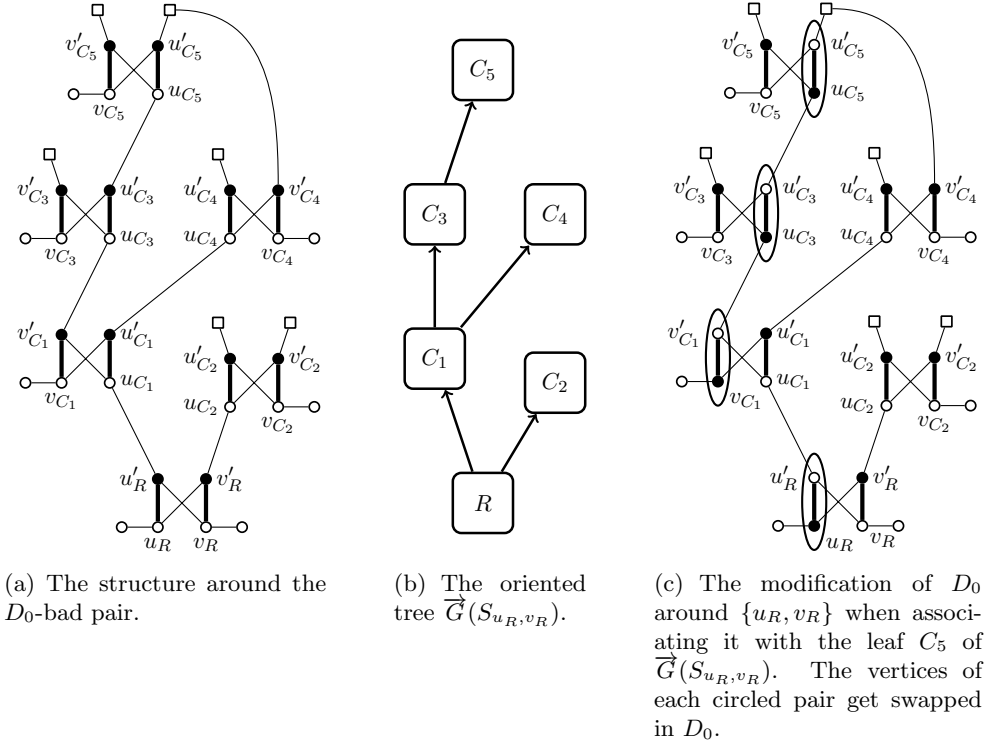

We now return to the proof of Theorem~\ref{t:main}. Our strategy is to modify the set $D_0$ in such a way that the resulting set  becomes a locating-dominating set of $G$ of cardinality at most one-half the order of~$G$. Consider the set of $D_0$-bad pairs. By Claim~\ref{clm:bad-pair-structure},
each such $D_0$-bad pair $\{u,v\}$ belongs to a bad $(D_0,M_0)$-matched $4$-cycle $R$ and there
is a set $S_{u,v}$ of vertex-disjoint bad $(D_0,M_0)$-matched $4$-cycles such that
$\overrightarrow{G}(S_{u,v})$ is an oriented tree rooted at $R$. We note that, given two $D_0$-bad pairs $\{u,v\}$ and $\{x,y\}$, the trees $\overrightarrow{G}(S_{u,v})$ and $\overrightarrow{G}(S_{x,y})$ are vertex-disjoint, and furthermore no bad $(D_0,M_0)$-matched $4$-cycles of $\overrightarrow{G}(S_{u,v})$ and $\overrightarrow{G}(S_{x,y})$ share any vertex. Indeed, by similar arguments as in Claim~\ref{subclm:vertex-disjoint} and Claim~\ref{subclm:oriented-tree}, we would otherwise contradict the definition of a bad $(D_0,M_0)$-matched $4$-cycle.

Now, given a $D_0$-bad pair $\{u,v\}$, consider
any leaf $C$ of $\overrightarrow{G}(S_{u,v})$. By Claim~\ref{clm:bad-pair-structure}(a), the vertices
$u'_C$ and $v'_C$ both have a distinct $M_0$-unmatched neighbor, say
$u_C''$ and $v_C''$, respectively. Further, by Claim~\ref{clm:bad-pair-structure}(c), both $u'_C$ and $v'_C$ belong to $X$.

For each $D_0$-bad pair $\{u,v\}$, we select an arbitrary leaf $C$ of $\overrightarrow{G}(S_{u,v})$ and associate the pair of vertices $u''=u_C''$ and $v''=v_C''$ of $M_0$-unmatched neighbors of $u'_C$ and $v'_C$, respectively, with the pair $\{u,v\}$, and we write $f(u,v)=\{u'',v''\}$. Let $V^*$ be the set of all $M_0$-unmatched vertices associated with some $D_0$-bad pair. We define the (multi)graph
$G^*$ on the vertex set $V^*$ by adding an edge joining $u''$ and $v''$ for each
$D_0$-bad pair $\{u,v\}$ such that $f(u,v) = \{u'',v''\}$.  As remarked earlier, the vertices  $u''$ and $v''$ are distinct, implying that $G^*$ has no loops (although it may have
multiple edges), no isolated vertices, and is subcubic (that is, has maximum degree at most~$3$).
Our aim is to add at most $|V^*|/2$ vertices to $D_0$ and to locally modify $D_0$ around the $D_0$-bad pairs in order to obtain a locating-dominating set, $D'$, of cardinality
\[
|D'| \le \match(G)+\frac{|V^*|}{2} \leq \match(G)+\frac{n-2\match(G)}{2} = \frac{n}{2}.
\]

We now describe the construction of such a set $D'$. Let $D^*$ be a minimum dominating set of $G^*$. Since $G^*$ has no isolated vertex, $|D^*| \le |V^*|/2$. Since $G^*$ has maximum degree at most~$3$ and since every vertex outside $D^*$ is adjacent to at least one vertex of $D^*$ in $G^*$, we note that $G^* - D^*$ has maximum degree at most~$2$. We now build a locating-dominating set from $D_0$ by adding $D^*$ to $D_0$ and by propagating modifications of $D_0$ along the oriented trees associated with all $D_0$-bad pairs. More precisely, we perform our propagation as follows.

\medskip
\noindent {\bf Step~1: We first consider all \boldmath{$D_0$}-bad pairs associated with a pair of vertices of \boldmath{$G^*$} at least one vertex of which belongs to the set \boldmath{$D^*$}.}  Let $\{u,v\}$ be such a $D_0$-bad pair, and let $u''$ and $v''$ be the vertices of $V^*$ such that $f(u,v) = \{u'',v''\}$. Adopting our earlier notation, let $u''=u''_C$ and $v''=v''_C$, where $C$ is the chosen leaf in the tree $\overrightarrow{G}(S_{u,v})$. Let $R$ be the bad $(D_0,M_0)$-matched $4$-cycle in $S_{u,v}$ containing $u$ and $v$. Renaming $u''$ and $v''$, if necessary, we may assume that $u''$ belongs to $D^*$. We now consider the unique (directed) path $P$ of $\overrightarrow{G}(S_{u,v})$ joining $R$ to $C$ and we modify $D_0$ along $P$ as follows. First, replace $u'_C$ with $u_C$ in $D_0$. If $B$ is the parent of $C$ in $\overrightarrow{G}(S_{u,v})$ (and so, $B$ is the vertex on the $(R,C)$-path
$P$ that immediately precedes~$C$) and $B$ is dependent on $C$ via $x'_B$, where $x_B\in\{u_B,v_B\}$, we replace $x'_B$ with $x_B$ in $D_0$. We continue this process until we perform the modification in the root $R$.
This exchange argument in the oriented tree $\overrightarrow{G}(S_{u,v})$ associated with the subgraph of $G$ corresponding to the set $S_{u,v}$ illustrated in Figure~\ref{fig:tree}(a) is shown in Figure~\ref{fig:tree}(c).
This process is done for all $D_0$-bad pairs associated with a pair of vertices of $G^*$ with at least one member in $D^*$. Let $D'$ be the resulting modified set $D_0$.

\begin{clm}\label{clm:step1}
The set of $(D' \cup D^*)$-bad pairs is a proper subset of the set of $D_0$-bad pairs.
\end{clm}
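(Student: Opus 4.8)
The plan is to verify that the Step~1 modifications strictly decrease the number of bad pairs among the vertices outside the new set $D' \cup D^*$, and that no new bad pairs are created. I would first argue that each $D_0$-bad pair $\{u,v\}$ \emph{treated} in Step~1 is genuinely resolved. The key observation is that when we propagate along the path $P$ from the leaf $C$ to the root $R$, swapping $u'_C$ for $u_C$ frees the $M_0$-unmatched vertex $u'' = u''_C$ (which now has its private dominator newly placed in $D^* \subseteq D^* \cup D'$, since $u'' \in D^*$), while replacing $u'_C$ by $u_C$ changes the domination pattern of $u_C$ and of $v_C$ on the cycle $C$. I would trace through a single edge $uu'$ of $M_0$ being swapped: after the swap, the vertex formerly relying on $u'$ as a dominator (namely the other endpoint that made $u$ a bad-pair partner) now sees $u$ instead, and because $u \in X$-type structure guarantees a distinguishing neighbor further up the tree, the previously-indistinguishable pair becomes distinguished. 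The root swap finally breaks the $\{u,v\}$ pair itself, since after replacing $u'_R$ (or $v'_R$) with $u_R$ (or $v_R$), the two vertices $u,v$ no longer have identical neighborhoods in the modified set.

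Second, I would check that the propagation does not create \emph{new} bad pairs outside the treated structure. This is where Claim~\ref{subclm:twocases} and Claim~\ref{subclm:twocases-v} do the work: each single swap along $P$ is exactly the exchange analyzed in those subclaims, where the only vertex that can become newly non-located is the $u''_C$-type vertex that is handed off to the next cycle down the tree (the child), and at the leaf this vertex is an $M_0$-unmatched vertex that is now dominated by the added vertex of $D^*$. I would emphasize that because the trees $\overrightarrow{G}(S_{u,v})$ for distinct $D_0$-bad pairs are vertex-disjoint (as noted immediately before Step~1) and share no $4$-cycle vertices, the modifications for different bad pairs do not interfere, so it suffices to analyze one tree at a time.

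Third, I would address the vertices of $D^*$ themselves and the remaining $M_0$-unmatched vertices in $V^*$. Adding $D^*$ to the set only increases domination and cannot create a bad pair involving a vertex of $D^*$ (such vertices are no longer outside the set). The subtle point is the $M_0$-unmatched vertices $u'', v''$ associated with treated pairs: I must confirm that whenever $u'' \in D^*$, the swap at the leaf $C$ leaves $v''$ (if $v'' \notin D^*$) still located, using that $v''$ retains its dominator $v'_C$ or acquires a distinguishing neighbor. I expect the \textbf{main obstacle} to be precisely this bookkeeping at the leaves and at the handoff vertices: ensuring that after a chain of swaps, the single vertex each swap could endanger is always either (i) resolved by the newly-added $D^*$ vertex at the leaf, or (ii) passed cleanly to the child cycle where the next swap resolves it, so that the net effect telescopes to killing the root pair $\{u,v\}$ with no residual damage. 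Once this telescoping is verified for one tree and the disjointness of trees is invoked, it follows that every treated $D_0$-bad pair disappears while no untreated $D_0$-bad pair is disturbed, giving a strict proper containment and establishing Claim~\ref{clm:step1}.
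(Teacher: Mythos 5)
Your proposal has the right overall shape (treated pairs get resolved, no new bad pairs are created, disjointness of the trees localizes the analysis), but the core of the claim is not actually proved, and the tools you point to would not prove it. First, a small but telling slip: the treated pair $\{u,v\}$ ceases to be a bad pair for the trivial reason that the root swap places one of $u,v$ \emph{inside} $D'$ (bad pairs consist of vertices outside the set), not because ``$u,v$ no longer have identical neighborhoods.'' Second, and more seriously, you delegate the ``no new bad pairs'' verification to Claims~\ref{subclm:twocases} and~\ref{subclm:twocases-v}, but those subclaims analyze a \emph{single} swap relative to $D_0$ and a minimum-bad-pair choice; they were used to build the tree $S_{u,v}$, and they do not apply to the composite set obtained after a chain of swaps together with the added set $D^*$. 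The paper's proof of this claim is a fresh case analysis. Its engine is the observation that every vertex removed from $D_0$ during the exchanges is either adjacent to a vertex of $D^*$ or adjacent to no $M_0$-unmatched vertex, so every $M_0$-unmatched vertex is still $2$-dominated by $D'\cup D^*$ or lies in $D^*$; from this one deduces that \emph{every vertex that is $1$-dominated by $D'\cup D^*$ is located}. This lemma-like step is absent from your plan, yet it is what disposes of the cycle vertices $y_A$ and of half the remaining cases.

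The genuinely delicate part, which your ``telescoping'' intuition glosses over, is locating the \emph{removed} vertices $x'_A$ themselves. For each swapped $4$-cycle $A$ one must examine the external neighbor $z$ of $x'_A$: if $z$ is $M_0$-unmatched then $z\in D^*$ and one rules out a confusion partner $w$ by noting $w$ would be $M_0$-matched and hence $3$-dominated; if $z=u_B$ for the child cycle $B$ and $z\in D'$, the only candidate confusion partner is $w=u'_B$, and ruling this out requires the structural facts that $v_A$ is adjacent to exactly two vertices of $D_0$ and that $\overrightarrow{G}(S_{u,v})$ contains no $2$-cycle (Claim~\ref{subclm:oriented-tree}). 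None of this case analysis appears in your proposal, and your assertion that ``the only vertex that can become newly non-located is the $u''_C$-type vertex handed off to the next cycle'' is not correct: the removed vertex $x'_A$ and the cycle vertex $y_A$ must also be checked, and the hardest case is $x'_A$. As written, the proposal is a plausible plan with the main obstacle correctly flagged but not overcome.
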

\noindent\emph{Proof of Claim~\ref{clm:step1}.} Let $\{u,v\}$ be an original $D_0$-bad pair associated with a pair of vertices of $G^*$ at least one of which belongs to the set $D^*$. Since at least one of $u$ and $v$ now belongs to $D'$, the pair $\{u,v\}$ is not a $(D'\cup D^*)$-bad pair.
It suffices to check the pairs of vertices that could possibly have been affected by the exchange arguments; that is, all vertices previously dominated by a vertex that has been removed from $D_0$ to construct $D'$ (this includes all vertices removed from $D_0$ to construct $D'$). %
A vertex affected by the modification belongs to a bad $(D_0,M_0)$-matched $4$-cycle $A$ in the selected path of $\overrightarrow{G}(S_{u,v})$ for some $D_0$-bad pair $\{u,v\}$ that is associated with a pair of vertices of $G^*$ at least one of which belongs to the set $D^*$.
Let the vertex set of $A$ be $\{x_A,y_A,x'_A,y'_A\}$ with $\{x_A,y_A\}=\{u_A,v_A\}$, where $x'_A$ has been replaced with $x_A$ in $D'$.

It is sufficient to check that the vertices $x'_A$, $y_A$ and the neighbor, $z$ say, of $x'_A$ not in $A$ are located by $D' \cup D^*$ or belong to $D' \cup D^*$.
We observe that even though $D'$ might not belong to $\cD(M)$, the set $D'$ contains exactly one vertex from each edge in $M_0$.
We note that every vertex that was removed from $D_0$ during the exchange arguments when constructing $D'$ is either adjacent to a vertex of $D^*$ or is adjacent to no $M_0$-unmatched vertex. Hence, the vertices of $D_0$ that are adjacent to an $M_0$-unmatched vertex that does not belong to $D^*$ are not removed from $D_0$ during the exchange arguments, implying by Lemma~\ref{lem:M-unmatched-2dom} that every $M_0$-unmatched vertex is adjacent to at least two vertices in $D' \cup D^*$ or belongs to $D^*$.
It follows that every vertex that is $1$-dominated by $D'\cup D^*$ is located by this set.
In particular, irrespective of whether $y=u$ or $y=v$, the vertex $y_A$ is $1$-dominated by
$D'\cup D^*$ and is thus located by
$D'\cup D^*$.

Suppose that $z$ is an $M_0$-unmatched vertex. Then, by construction, $z \in D^*$. In this case, $x'_A$ is dominated by $z$ and $x_A$ but by no other vertex in $D' \cup D^*$. If another vertex $w$ is also only dominated by $z$ and $x_A$ from $D' \cup D^*$, then such a vertex cannot be $M_0$-unmatched because the set of $M_0$-unmatched vertices forms an independent set. But then $w$ is dominated by $x_A$, $z$ and its $M_0$-matched neighbor, a contradiction. Hence, if $z$ is $M_0$-unmatched, then $x'_A$ is located by $D'\cup D^*$.

Suppose that $z$ is not an $M_0$-unmatched vertex. Thus, $z$ belongs to another bad $(D_0,M_0)$-matched $4$-cycle $B$ of $\overrightarrow{G}(S_{u,v})$ where $z=u_B$ and where $A$ is dependent on $B$
via $x'_A$ (as illustrated in Figure~\ref{fig:4cycles}). If $z\notin D'$, then both $z$ and $x'_A$ are $1$-dominated by
$D'\cup D^*$ and hence are located by $D'\cup D^*$. Finally, if $z \in D'$, then $x'_A$ is only dominated by $z$ and $x_A$ from $D' \cup D^*$. Suppose to the contrary that some other vertex $w$ is also only dominated by $z$ and $x_A$ from $D' \cup D^*$. Then, $w$ must be the neighbor of $z$ in $B$ that was removed from $D_0$, namely the vertex $w = u_B'$ (recall that $z=u_B$). Thus, $u_B'$ is adjacent to $x_A$. If $A = R$, then $x_A \in \{u,v\}$ and $x_A$ would be $3$-dominated by $D_0$, a contradiction. Hence, $A \ne R$. If $x = v$, then we contradict the fact that $v_A$ is adjacent to exactly two vertices of $D_0$, namely to $u'_A$ and $v'_A$, and therefore could not be adjacent to $u_B' \in D$. Hence, $x = u$. But then $B$ would be dependent on $A$ via $w = u_B'$. However, recall that $A$ is dependent on $B$ via $u'_A$, implying that $\overrightarrow{G}(S_{u,v})$ would contain a $2$-cycle joining $A$ and $B$, a contradiction. Hence, if $z$ is not an $M_0$-unmatched vertex, then once again $x'_A$ is located by $D'\cup D^*$. This completes the proof of Claim~\ref{clm:step1}.~$(\Box)$

\medskip
By Claim~\ref{clm:step1}, the set of $D'$-bad pairs is a proper subset of the set of $D_0$-bad pairs, implying that all remaining $D'$-bad pairs are associated with a pair of vertices of $G^*$ neither of which belong to the set $D^*$.

\noindent {\bf Step~2: We next consider all remaining \boldmath{$D'$}-bad pairs associated with a pair of vertices of \boldmath{$G^*$} neither of which belong to the set \boldmath{$D^*$}.}
For each such $D'$-bad pair $\{u,v\}$, we have $f(u,v)=\{u'',v''\}$ where
$\{u'',v''\} \subseteq V^* \setminus D^*$. Let $\cC$ be a component of $G^*-D^*$ that contains at least one edge. As observed earlier, $G^* - D^*$ has maximum degree at most~$2$. Thus, $\cC$ is a path or a cycle. If $\cC$ is a path, let $\cC$ be given by $c_0c_1 \ldots c_{k-1}$, while if $\cC$ is a cycle, let $\cC$ be given by $c_0c_1 \ldots c_{k-1}c_0$ (possibly, $\cC$ is a $2$-cycle). We now consider an edge $c_ic_{(i+1) \bmod k}$ in $\cC$, where $i \in \{0,\ldots,k-2\}$ if $\cC$ is a path and where $i \in \{0,\ldots,k-1\}$ if $\cC$ is a cycle.
Let $\{u,v\}$ be a $D_0$-bad pair such that $f(u,v)=\{c_i,c_{(i+1)\bmod k}\}$, and let $B$ be the bad $(D_0,M_0)$-matched $4$-cycle of $S_{u,v}$ such that one of $c_i$ and $c_{(i+1)\bmod k}$ is adjacent to $u'_B$ and the other to $v'_B$. Let $c_i$ be the neighbor of $x'_B$, where $x_B \in \{u_B,v_B\}$. We now propagate modifications of $D'$ along a path in $\overrightarrow{G}(S_{u,v})$ in the same way as we did in Step~1, except that we start the modifications of $D'$ along the oriented tree by replacing $x'_B$ with $x_B$ and then continuing exactly as before. The resulting modification of $D'$ ensures that for every vertex in $\cC$, at most one of its neighbors is removed from $D_0$. This process is done for all $D_0$-bad pairs associated to a pair of vertices of $G^*$ neither of which belong to the set $D^*$. Let $D''$ be the resulting modified set $D_0$.

\begin{clm}\label{clm:step2}
No $D_0$-bad pair is a $(D'' \cup D^*)$-bad pair. Further, the set of $(D''\cup D^*)$-bad pairs is a proper subset of the set of $(D'\cup D^*)$-bad pairs.
\end{clm}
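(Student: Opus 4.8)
The plan is to prove Claim~\ref{clm:step2} by mirroring the argument of Claim~\ref{clm:step1}, exploiting that the oriented trees $\overrightarrow{G}(S_{u,v})$ attached to distinct $D_0$-bad pairs are pairwise vertex-disjoint (as noted just after the proof of Claim~\ref{clm:bad-pair-structure}). First I would dispose of the two statements that follow formally once the ``no new bad pairs'' inclusion is in hand. Every $D_0$-bad pair $\{u,v\}$ corresponds to an edge of $G^*$. If that edge has an endpoint in $D^*$, the pair was already destroyed in Step~1 (one of $u,v$ was placed in $D'$), and since Step~2 only alters vertices lying in the trees of the remaining, disjoint, $D_0$-bad pairs, such a pair remains destroyed in $D''\cup D^*$. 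If neither endpoint lies in $D^*$, the pair is handled in Step~2, where the propagation runs from the chosen leaf $B$ of $\overrightarrow{G}(S_{u,v})$ down to its root $R$, and the final exchange at $R$ inserts one of $u,v$ into $D''$. This gives the first assertion, that no $D_0$-bad pair is a $(D''\cup D^*)$-bad pair; properness of the inclusion in the second assertion is then immediate, because at least one remaining $(D'\cup D^*)$-bad pair is destroyed by Step~2.

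For the inclusion itself I must show Step~2 creates no new bad pairs, i.e.\ every $(D''\cup D^*)$-bad pair is a $(D'\cup D^*)$-bad pair. As in Claim~\ref{clm:step1}, it suffices to examine the vertices that could be affected by the Step-2 exchanges: for each bad $(D_0,M_0)$-matched $4$-cycle $A$ on a processed path, with vertex set $\{x_A,y_A,x'_A,y'_A\}$ where $x'_A$ was swapped out of $D_0$ for its matched partner $x_A$, I would verify that $x'_A$, that $y_A$, and that the neighbour $z$ of $x'_A$ outside $A$ each either lie in $D''\cup D^*$ or are located by it. The engine driving all of these verifications is the same sub-lemma as in Step~1: every vertex that is $1$-dominated by $D''\cup D^*$ is located by it. Granting this, $y_A$ is now $1$-dominated and hence located; $x'_A$ is dominated only by $x_A$ and possibly $z$, and the dependency and in-degree structure of $\overrightarrow{G}(S_{u,v})$ (Claims~\ref{subclm:vertex-disjoint} and~\ref{subclm:oriented-tree}) forbids a second vertex with exactly these dominators, via the same $2$-cycle argument used at the end of Claim~\ref{clm:step1}.

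Hence the heart of the proof, and the step I expect to be the main obstacle, is re-establishing that sub-lemma in the Step-2 setting, namely that \emph{every $M_0$-unmatched vertex is $2$-dominated by $D''\cup D^*$ or belongs to $D^*$}. This is more delicate than in Step~1 because Step~2, unlike Step~1, does delete vertices of $D_0$ adjacent to $M_0$-unmatched vertices lying outside $D^*$ (precisely the vertices $c_i$ of the path and cycle components $\cC$ of $G^*-D^*$). I would combine two ingredients. First, the ``lower-index endpoint'' convention in the Step-2 propagation guarantees that each vertex of $\cC$ loses at most one of its $D_0$-neighbours. Second, the maximality of $M_0$ forbids an augmenting path of length three, and therefore forces every neighbour of an $M_0$-unmatched vertex whose matched partner has no $M_0$-unmatched neighbour to lie in $D_0$; this shows that a $c_i$ in danger of dropping to $1$-domination after its single allowed deletion was in fact $3$-dominated in $D_0$, so it remains $2$-dominated in $D''\cup D^*$. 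The remaining vertices of $\cC$ keep two dominators directly, and all other $M_0$-unmatched vertices inherit their at least two dominators from the corresponding property established in Step~1. Carrying out this domination bound cleanly, while tracking the interaction between the Step-1 and Step-2 modifications at the shared vertices $c_i$, is where the real work lies.
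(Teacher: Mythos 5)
Your overall plan matches the paper's: Step~2 is handled by re-running the Claim~\ref{clm:step1} analysis, and the only genuinely new work concerns the $M_0$-unmatched vertices $c$ lying in components of $G^*-D^*$ that contain an edge. You also correctly isolate the key fact needed, namely that such a vertex $c$ was $3$-dominated by $D_0$ (so that losing one neighbour still leaves it $2$-dominated). But your justification of that fact is wrong. You derive it from ``the maximality of $M_0$ forbids an augmenting path of length three.'' That observation only rules out the two endpoints of a matching edge having \emph{distinct} unmatched neighbours; it does not rule out an unmatched vertex $c$ being adjacent to \emph{both} endpoints $p,q$ of a single edge $pq\in M_0$. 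That configuration contains no augmenting path, is explicitly allowed by the second bullet in the definition of $\cD_G(M)$, and is exactly the (only) way an $M_0$-unmatched vertex can fail to be $3$-dominated by a set in $\cD_G(M_0)$. The paper excludes it by a different, structural argument: since $c$ has degree at least~$2$ in $G^*$, the offending matching edge would have to be an edge $x_Bx'_B$ of a leaf $4$-cycle $B$; by Claim~\ref{clm:bad-pair-structure}(c) the vertex $x'_B$ lies in $X$, so by Theorem~\ref{t:structure} its partner $x_B$ sits in an odd component of $G-X$ containing no $M_0$-unmatched vertex, contradicting the adjacency of the unmatched vertex $c$ to $x_B$. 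Without invoking $X$ and Claim~\ref{clm:bad-pair-structure}(c), your argument does not close this case.

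A second, smaller omission: establishing that $c$ remains $2$-dominated by $D''\cup D^*$ does not by itself show that $c$ is \emph{located}. Since $c\notin D''\cup D^*$, you must also exclude a vertex $w$ sharing the same two dominators as $c$; this is not covered by your ``$1$-dominated implies located'' engine, nor by the $2$-cycle argument of Claim~\ref{clm:step1}, which locates the swapped-out vertices $x'_A$ rather than the unmatched vertices. The paper needs a separate paragraph here, identifying the only candidate $w=x_B$ and deriving a contradiction with $G$ being cubic via the dependency structure of $S_{u,v}$. The rest of your outline (disjointness of the trees, the bookkeeping showing that each $c_i$ loses at most one neighbour, and the deduction of the two assertions of the claim from the no-new-bad-pairs inclusion) is consistent with the paper.
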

\noindent\emph{Proof of Claim~\ref{clm:step2}.} It suffices to check as before the pairs of vertices that could possibly have been affected by the exchange arguments; that is, all vertices previously dominated by a vertex that has been removed from $D'$ to construct $D''$ as well as all vertices removed from $D'$ to construct $D''$. The proof is the same as in the proof of Claim~\ref{clm:step1}, except for the vertices in $G^*-D^*$. We therefore only prove that vertices that belong to components of $G^*-D^*$ that contain at least one edge are located by $D''\cup D^*$. Let $c$ be such a vertex in $G^*-D^*$. As observed earlier, such a vertex $c$ belongs to either a path component or a cycle component of $G^* - D^*$. Further, the modifications of $D'$ when constructing $D''$ ensure that for every vertex in $G^* - D^*$, at most one of its neighbors is removed from $D_0$.

We show next that $c$ was $3$-dominated by $D_0$.
Suppose to the contrary that the vertex $c$ is not $3$-dominated by $D_0$ and therefore, by definition of $\cD(M)$, is adjacent to both ends of some edge $pq$ of $M_0$. In this case, since $c$ has degree at least~$1$ in $G^* - D^*$ and therefore degree at least~$2$ in $G^*$, the edge $pq$ must be an edge $x_Bx'_B$, where $x_B \in \{u_B,v_B\}$, in a bad $(D_0,M_0)$-matched $4$-cycle $B$ of $S_{u,v}$ for some $D_0$-bad pair $\{u,v\}$. However by Claim~\ref{clm:bad-pair-structure}(c), the vertex $x'_B$ belongs to $X$. Therefore the vertex $x_B$, which is $M_0$-matched to $x_B'$, belongs to an odd component of $G - X$ that contains no $M_0$-unmatched vertex. However, the $M_0$-unmatched vertex $c$, which is adjacent to $x_B$, belongs to the same component of $G - X$ as $x_B$, a contradiction. Hence, the vertex $c$ was $3$-dominated by $D_0$.

We show now that vertex $c$ is located by $D'' \cup D^*$. If $c$ is $3$-dominated by $D'' \cup D^*$, then this follows from the twin-freeness of $G$. Hence we may assume that $c$ is not $3$-dominated by $D'' \cup D^*$. Since the vertex $c$ was $3$-dominated by $D_0$, and at most one of its neighbors is removed from $D_0$, this implies that the vertex $c$ has exactly two neighbors in $D''$ (and no neighbors in $D^*$ in $G$), and is therefore $2$-dominated by $D'' \cup D^*$. Suppose to the contrary that there is a vertex $w$ that is not located from $c$ by $D'' \cup D^*$. Let $d$ be a vertex in $D^*$ that is adjacent to $c$ in $G^*$. Then there exists a $D_0$-bad pair $\{u,v\}$ such that $f(u,v)=\{c,d\}$.

Let $B$ be the bad $(D_0,M_0)$-matched $4$-cycle of $S_{u,v}$ such that one of $c$ and $d$ is adjacent to $u'_B$ and the other to $v'_B$. Let $c$ be the neighbor of $x'_B$, where $x_B \in \{u_B,v_B\}$. By Step~1, we know that $x_B' \in D''$ and $y_B \in D''$. Therefore, the vertex $w$ must be the vertex $x_B$. Let $z$ be the neighbor of $c$ in $D''$ that is different from $x_B'$. Since the set of neighbors of $c$ in $D''$ is a subset of the set of its neighbors in $D_0$, we note that $\{x_B',z\} \subset D_0$. If $x_B = v_B$, then $v_B$ would be $3$-dominated by $D_0$, contradicting the fact that $B$ is a bad $(D_0,M_0)$-matched $4$-cycle. Similarly, if $x_B = u$, then $u$ would be $3$-dominated by $D_0$, a contradiction. Hence, $x_B = u_B$ and in $S_{u,v}$ there is a bad $(D_0,M_0)$-matched $4$-cycle that depends on $B$ via the vertex~$z$. But then $z$ has at least two neighbors apart from $c$ and $x_B$, contradicting the fact that $G$ is cubic. Therefore, the vertex $c$ is located by $D'' \cup D^*$. This completes the proof of Claim~\ref{clm:step2}.~$(\Box)$

\medskip
Claim~\ref{clm:step2} implies that there is no $(D''\cup D^*)$-bad pair. Thus, the set $D''\cup D^*$ is a locating-dominating set of $G$. Therefore,
\[
\gL(G) \le |D|+|D^*| \le \match(G)+\gamma(G^*) \le \match(G)+\frac{|V^*|}{2} \leq \match(G)+\frac{n-2\match(G)}{2} = \frac{n}{2}.
\]
This completes the proof of Theorem~\ref{t:main}.\end{proof}

\subsection{Tight examples}

We remark that the prisms $C_3 \, \Box \, K_2$ and $C_4 \, \Box \, K_2$ (shown in Figure~\ref{fig:prism}(a) and~\ref{fig:prism}(b), respectively) have location-domination number exactly one-half their order. However, it remains as an open problem to characterize all twin-free, cubic graphs $G$ of order~$n$ that satisfy $\gL(G) = \frac{n}{2}$. Note that the prisms $C_k \, \Box \, K_2$ for $k\geq 5$ do not belong to this family.

\begin{figure}[htb]
\tikzstyle{every node}=[circle, draw, fill=black!0, inner sep=0pt,minimum width=.15cm]
\begin{center}
\begin{tikzpicture}[thick,scale=.6]
  \draw(0,0) { 
    +(9.00,3.00) -- +(12.00,3.00)
    +(12.00,3.00) -- +(12.00,0.00)
    +(12.00,0.00) -- +(9.00,0.00)
    +(9.00,0.00) -- +(9.00,3.00)
    +(9.00,3.00) -- +(10.00,2.00)
    +(10.00,2.00) -- +(11.00,2.00)
    +(11.00,2.00) -- +(12.00,3.00)
    +(11.00,2.00) -- +(11.00,1.00)
    +(11.00,1.00) -- +(10.00,1.00)
    +(10.00,1.00) -- +(10.00,2.00)
    +(10.00,1.00) -- +(9.00,0.00)
    +(11.00,1.00) -- +(12.00,0.00)
    +(3.00,3.00) -- +(3.00,0.00)
    +(3.00,0.00) -- +(2.00,1.50)
    +(2.00,1.50) -- +(3.00,3.00)
    +(3.00,3.00) -- +(0.00,3.00)
    +(0.00,3.00) -- +(0.00,0.00)
    +(0.00,0.00) -- +(1.00,1.50)
    +(1.00,1.50) -- +(0.00,3.00)
    +(1.00,1.50) -- +(2.00,1.50)
    +(0.00,0.00) -- +(3.00,0.00)
    +(0.00,0.00) node{}
    +(1.00,1.50) node{}
    +(3.00,3.00) node{}
    +(3.00,0.00) node{}
    +(0.00,3.00) node{}
    +(2.00,1.50) node{}
    +(12.00,0.00) node{}
    +(11.00,1.00) node{}
    +(10.00,1.00) node{}
    +(9.00,0.00) node{}
    +(10.00,2.00) node{}
    +(11.00,2.00) node{}
    +(9.00,3.00) node{}
    +(12.00,3.00) node{}
    +(1.50,-0.75) node[rectangle, draw=white!0, fill=white!100]{(a) $C_3 \, \Box \, K_2$}
    +(10.5,-0.75) node[rectangle, draw=white!0, fill=white!100]{(b) $C_4 \, \Box \, K_2$}  
  };
\end{tikzpicture}
\end{center}
\vskip -0.6 cm \caption{The prisms $C_3 \, \Box \, K_2$ and $C_4 \, \Box \, K_2$.} \label{fig:prism}
\end{figure}

\section{Conclusion}

We conclude the paper with several intriguing open problems and questions that we have yet to solve.

\noindent
\textbf{Problem~1.}  Characterize the extremal graphs that achieve equality in the bound of Theorem~\ref{t:main}; that is, characterize the connected twin-free, cubic graphs having location-domination number exactly one-half their order.

\noindent
\textbf{Problem~2.} Determine whether the result of Theorem~\ref{t:main} can be strengthened by proving Conjecture~\ref{conj} for \emph{subcubic} graphs.

\noindent
\textbf{Problem~3.} Determine whether  Theorem~\ref{t:main} can be extended to connected cubic graphs in general (allowing twins) with the exception of a finite set of forbidden graphs. Two such forbidden graphs are the complete graph $K_4$ and the complete bipartite graph $K_{3,3}$, but it is possible that these are the only two exceptions. Proving this would still be weaker than proving the conjecture of Henning and L\"owenstein~\cite{hl12} that every cubic graph different from $K_4$ and $K_{3,3}$ has a \emph{total} locating-dominating set of size at most one-half its order.

\noindent
\textbf{Problem~4.} Determine whether every connected twin-free, cubic graph $G$ satisfies $\gL(G) \le \match(G)$. More generally, determine classes of twin-free graphs $G$ satisfying $\gL(G) \le \match(G)$. We remark that Garijo et al.~\cite{conjpaper} proved that every nontrivial twin-free graph $G$ without $4$-cycles satisfies $\gL(G) \le \match(G)$, and therefore Conjecture~\ref{conj} holds for these graphs.

\medskip

\end{document}